\pgfplotsset{compat=1.18}
\newcounter{BILPcounter}
\renewcommand{\theBILPcounter}{\arabic{BILPcounter}}
\newcounter{constraintcounter}
\renewcommand{\theconstraintcounter}{C\arabic{constraintcounter}}
\newenvironment{BILP}[1][]{%
    \refstepcounter{BILPcounter}%
    \begin{mdframed}[roundcorner=10pt]%
    \textbf{BILP~\theBILPcounter} #1\par\nobreak
    \setcounter{constraintcounter}{0}
}{%
    \end{mdframed}
}
\newenvironment{LP}[1][]{%
    \refstepcounter{BILPcounter}%
    \begin{mdframed}[roundcorner=10pt]%
    \textbf{LP~\theBILPcounter} #1\par\nobreak
    \setcounter{constraintcounter}{0}
}{%
    \end{mdframed}
}
\newcommand{\LPref}[1]{LP~\getrefnumber{#1}}
\newcommand{\BILPref}[1]{BILP~\getrefnumber{#1}}
\newcommand{\constrref}[1]{Constraint (\getrefnumber{#1})}\usepackage{adjustbox}
\newcommand{\constr}[1][]{%
  \refstepcounter{constraintcounter}%
  \ifx\\#1\\%
    \tag{\theconstraintcounter}%
  \else
    \label{#1}\tag{\theconstraintcounter}%
  \fi
}
\DeclareMathOperator{\spec}{spec}
\DeclareMathOperator{\ev}{ev}
\DeclareMathOperator{\tr}{tr}
\def\vec0{\mbox{\bf 0}}
\def\tr{\mathop{\rm tr }\nolimits}
\def\ev{\mathop{\rm ev}\nolimits}
\def\matrix0{{\mbox {\boldmath $O$}}}
\tiny\color{gray},
\tikzset{
every node/.style={circle, inner sep=2pt}
}
\newtheorem{theorem}{Theorem}
\newtheorem{lemma}[theorem]{Lemma}
\newtheorem{proposition}[theorem]{Proposition}
\newtheorem{corollary}[theorem]{Corollary}
\theoremstyle{definition}
\newtheorem{definition}[theorem]{Definition}
\newtheorem{observation}[theorem]{Observation}
\newtheorem{remark}[theorem]{Remark}
\newtheorem{example}[theorem]{Example}
\newtheorem{problem}{Problem}
\newtheorem{question}[theorem]{Question}
\def\1{{\bf 1}}
\title{Tales of Hoffman: from a distance}
\author{Aida Abiad \thanks{Department of Mathematics and Computer Science, Eindhoven University of Technology, The Netherlands,  Department of Mathematics and Data Science, Vrije Universiteit Brussel, Belgium \texttt{a.abiad.monge@tue.nl}}
\and
Jan Meeus \thanks{Department of Mathematics: Analysis, Logic and Discrete Mathematics, Ghent University, Belgium \texttt{jan.meeus@ugent.be}}
}
\date{}
\begin{document}
\maketitle

\begin{abstract}
Hoffman proved that a graph $G$ with adjacency eigenvalues $\lambda_1\geq \cdots \geq \lambda_n$ and chromatic number $\chi(G)$ satisfies
$\chi(G)\geq 1+\kappa,$
where $\kappa$ is the smallest integer such that 
$$\lambda_1+\sum_{i=1}^{\kappa}\lambda_{n+1-i}\leq 0.$$
We extend this eigenvalue bound to the distance-$k$ setting, and also show a strengthening of it by proving that it also lower bounds the corresponding quantum distance coloring graph parameter. 
The new bound depends on a degree-$k$ polynomial which can be chosen freely, so one needs to make a good choice of the polynomial to obtain as strong a bound as possible. We thus propose linear programming methods to optimize it. We also investigate the implications of the new bound for the quantum distance chromatic number, showing that it is sharp for some classes of graphs. Finally, we extend the Hoffman bound to the distance setting of the vector chromatic number. Our results extend and unify several previous bounds in the literature. \\

\noindent \textbf{Keywords:} spectral bounds, linear programming, distance chromatic number, quantum graph coloring, vector chromatic number  
\end{abstract}

\section{Introduction}

In recent years, distance based colorings have gained significant attention in the literature, see e.g. \cite{agnarsson2003coloring, kral2004coloring, kramer2008survey, niranjan2019k, fertin2003acyclic}. Having been introduced in 1969 \cite{kramer1969farbungsproblem, kramer1969probleme}, a \emph{distance-$k$ coloring} is a natural extension of proper colorings, where vertices at distance at most $k$ should have distinct colors. Equivalently, one may consider them proper colorings of graph powers as studied in \cite{alon2002chromatic}. For a graph $G$, the \emph{$k$-th power graph} $G^k$ is defined as the graph with the same vertex set as $G$, but with edges between any two vertices at distance at most $k$. Clearly, a graph coloring is a proper coloring of $G^k$ if and only if it is distance-$k$ coloring of $G$. Another source of motivation for looking at distance-$k$ colorings comes from the fact that they have useful applications in coding theory, see, e.g. \cite{klotz2006distance, ostergaard2004hypercube, abiad2024eigenvalue}.

The \emph{distance-$k$ chromatic number} of a graph $G$ is the minimal number of colors $\chi_k(G)$ that a distance-$k$ coloring of $G$ must have. Equivalently, it is the chromatic number of the $k$-th \emph{power graph} of $G$. It is important to note that $\chi_k$ is an NP-hard parameter \cite{mccormick1983optimal}, thus much research focuses on finding good lower and upper bounds that can be efficiently computed (for instance, using eigenvalues, since then they can be computed in polynomial time). We should also note that algebraic and combinatorial properties of power graphs in general cannot be inferred from the properties of the original graph, and this in particular also applies to the eigenvalues of $G$ and $G^k$, which are in general not related (see \cite{XING201739,das2013laplacian, abiad2022optimization}). As such, eigenvalue bounds that have been proven to be famously useful for the chromatic number are of interest to study whether they hold in the more general distance-$k$ context. Note also that the usage of the classical bounds for the chromatic number on the $k$-th power graphs often yields suboptimal results when compared to the distance-$k$ specific bounds, so our main focus will be on deriving spectral bounds using the eigenvalues of the base graph $G$.

The most famous eigenvalue bound for the chromatic number of a graph is the celebrated \emph{Hoffman bound} \cite{hoffman2003eigenvalues}, which says that $\chi(G)\geq 1-\lambda_1/\lambda_n$, where $\lambda_1$ and $\lambda_n$ denote the largest and smallest adjacency eigenvalues of $G$ respectively. This bound was  extended to the distance-$k$ setting in \cite[Theorem 4.3]{abiad2022optimization} and to the distance-$k$ quantum setting in \cite[Theorem 14]{abiad2025eigenvalue} under the name of \emph{Hoffman-type bound}. In \cite{hoffman2003eigenvalues}, Hoffman also derived another bound on the chromatic number of a graph which is less popular but even stronger: 
\begin{theorem}[$\kappa$ bound  \cite{hoffman2003eigenvalues}]\label{thm:hoffmanbetterbound}
A connected graph $G$ on $n$ vertices with adjacency eigenvalues $\lambda_1\geq \cdots \geq \lambda_n$ and chromatic number $\chi(G)$ satisfies
\begin{equation}\label{bound:hoffmanrefinement}
    \chi(G)\geq 1+\kappa
\end{equation}
where $\kappa$ is the smallest integer such that 
$$\lambda_1+\sum_{i=1}^{\kappa}\lambda_{n+1-i}\leq 0.$$
\end{theorem}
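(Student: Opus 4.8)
The plan is to argue contrapositively on the number of colours. The plan is to show that whenever $G$ admits a proper colouring with $c$ colours, the partial-sum inequality $\lambda_1+\sum_{i=1}^{c-1}\lambda_{n+1-i}\le 0$ must hold. Since $\kappa$ is by definition the \emph{smallest} integer for which $\lambda_1+\sum_{i=1}^{\kappa}\lambda_{n+1-i}\le 0$, this forces $\kappa\le c-1$, and taking $c=\chi(G)$ delivers $\chi(G)\ge 1+\kappa$. Thus the whole difficulty is concentrated in proving the single inequality $\lambda_1+\sum_{i=1}^{c-1}\lambda_{n+1-i}\le 0$ for a $c$-colourable graph.

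To prove it, fix a proper colouring with colour classes $V_1,\dots,V_c$, each an independent set, so that the diagonal blocks of $A$ indexed by the $V_j$ vanish. The naive move is to pass to the $c\times c$ quotient matrix of $A$ on this partition: its trace is $0$ by independence, and eigenvalue interlacing controls its spectrum. The obstacle is that interlacing only gives $\mu_1\le\lambda_1$ for the top quotient eigenvalue, which is the \emph{wrong} direction; to close the argument I need $\mu_1$ to equal $\lambda_1$ exactly. The device that fixes this is to weight the partition by the Perron eigenvector instead of by bare indicator vectors. Let $v>0$ be the Perron eigenvector of $A$ (strictly positive because $G$ is connected), let $w_j$ be the restriction of $v$ to the coordinates of $V_j$, and let $S$ be the $n\times c$ matrix whose columns are the normalized vectors $w_j/\|w_j\|$; these have disjoint support, so $S^{\top}S=I_c$. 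Setting $B=S^{\top}AS$, I would verify three facts: (i) $B$ is symmetric, entrywise nonnegative, and has zero diagonal, hence $\tr B=0$, because each $V_j$ is independent; (ii) the strictly positive vector $\hat v=(\|w_1\|,\dots,\|w_c\|)^{\top}$ satisfies $S\hat v=v$, so $B\hat v=S^{\top}Av=\lambda_1 S^{\top}v=\lambda_1\hat v$; and (iii) since $B$ is nonnegative and admits the positive eigenvector $\hat v$, Perron--Frobenius forces $\lambda_1$ to be exactly the largest eigenvalue $\mu_1$ of $B$.

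With $\mu_1=\lambda_1$ secured, the proof finishes by interlacing: the eigenvalues of $B=S^{\top}AS$ interlace those of $A$, giving $\mu_i\ge\lambda_{n-c+i}$ for each $i$, so $\sum_{i=2}^{c}\mu_i\ge\sum_{i=1}^{c-1}\lambda_{n+1-i}$. Since $\tr B=0$ we have $\sum_{i=2}^{c}\mu_i=-\mu_1=-\lambda_1$, and combining the two estimates yields $\lambda_1+\sum_{i=1}^{c-1}\lambda_{n+1-i}\le 0$, as required. I expect the main obstacle to be step (iii): arguing that the Perron-weighted quotient \emph{recovers} $\lambda_1$ as its top eigenvalue rather than merely bounding it from above, since this is precisely the point where the straightforward interlacing estimate fails and where the choice of weighting, together with the connectedness of $G$, is genuinely needed.
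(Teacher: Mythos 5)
Your proof is correct, and it is essentially the same argument the paper uses for the general unified $\kappa$ bound (Theorem~\ref{thm:quantumhoffmanbetterbound}), specialized to $k=1$, $p(x)=x$, and rank-one classical projectors: compress $A$ by the Perron-weighted colour-class vectors, get zero trace from independence of the classes, identify $\lambda_1$ as the top eigenvalue of the compression, and close with interlacing. One minor remark: the Perron--Frobenius appeal in your step (iii) is unnecessary, since step (ii) already exhibits $\lambda_1$ as an eigenvalue of $B$ while interlacing gives $\mu_1\le\lambda_1$, so $\mu_1=\lambda_1$ follows immediately.
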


The above bound can be seen as a refinement of the Hoffman bound. Heuristically, it is a much stronger bound than Hoffman's bound in terms of tightness frequency (see Table \ref{tab:comparisonchromaticbounds}), and it is easily shown that $1-\lambda_1/\lambda_n\leq 1+\kappa$ for any graph. Note also that the $\kappa$ bound may be formulated as the following inequality of eigenvalues:
$$
\lambda_1 + \sum_{i=1}^{\chi - 1}\lambda_{n-i+1}\leq 0.
$$
The above bound has been shown to also lower bound the quantum chromatic number of a graph \cite{wocjan2019spectral}, and it has also been used to show non-existence of certain classes of 5-chromatic strongly regular graphs \cite{fiala20065}. Other bounds on the chromatic number that encompass all eigenvalues of a graph have been studied in \cite{EW2013,EW2015,COUTINHO2019345}.

Motivated by the above $\kappa$ bound and its applications, we  extend it to the distance and quantum settings, and as a byproduct we obtain a unification of some existing bounds. In particular, we improve the Hoffman-type bound for the distance-$k$ quantum chromatic number from \cite[Theorem 14]{abiad2025eigenvalue}. On the other hand, we extend the $\kappa$ bound on the quantum chromatic number from \cite[Theorem 7]{wocjan2019spectral} to the distance-$k$ quantum setting. We achieve this by unifying the two known bounds into a \emph{unified $\kappa$ bound} for the distance-$k$ quantum chromatic number. 
The new bound depends on the choice of a polynomial of degree at most $k$, therefore we propose linear programming techniques to obtain the optimal bound for any specific graph. 

Next we provide infinite graph classes for which the new unified $\kappa$ bound is sharp (and for which the existing distance-$k$ Hoffman-type bound from \cite[Theorem 4.3]{abiad2022optimization} does not achieve equality). We also illustrate how to use the tightness of the new unified $\kappa$ bound to compute the value of the quantum distance chromatic number (and which could not have been obtained through the usage of known bounds). We do so by utilizing the fact that the distance-$k$ chromatic number is bounded below by the distance-$k$ quantum chromatic number, and equality of the new bound with the classical chromatic parameter therefore computes the quantum  chromatic parameter. Note that the latter is not known to be computable in general \cite{paulsen2016estimating}, thus obtaining new sharp bounds for this quantum chromatic parameter is of great importance.

Finally we investigate the \emph{distance-$k$ vector chromatic number} $\chi_{kv}$, which is the vector chromatic number of the $k$-th power graph. Motivated by the fact that the  Hoffman bound $1-\lambda_1/\lambda_n$ also holds for the vector chromatic number \cite{bilu2006tales}, we extend the distance-$k$ Hoffman-type bound \cite[Theorem 4.3]{abiad2022optimization} to the distance-$k$ vector chromatic number, and optimize it using linear programming methods. In doing so, we improve upon the Hoffman-type bound by obtaining a new bound which is proven to be at least as sharp, while it is also a lower bound for the distance-$k$ vector chromatic number.

\section{Preliminaries}

\subsection{Graph theory background}

For a graph $G=(V,E)$ on $n$ vertices we denote the \emph{adjacency matrix} of $G$ by $A(G)$ or $A=(a_{vw})\in\mathbb{R}^{n\times n}$, which is indexed by the vertices $V=V(G)$ of $G$. We denote the (adjacency) \emph{spectrum} by the multiset $\spec(G)=\{\lambda_1,\ldots,\lambda_n\}$ where $\lambda_1\geq \cdots \geq \lambda_n$ are the eigenvalues of $A(G)$. We denote the set of distinct eigenvalues of a graph $G$ as $\ev(G)=\{\theta_0,\ldots,\theta_d\}$ where $\theta_0>\ldots>\theta_d$ and $d\geq 0$. If $m_0,\ldots,m_d$ are the respective multiplicities of $\theta_0,\ldots,\theta_d$, then we may also write $\spec(G)=\{\theta_0^{[m_0]},\ldots,\theta_d^{[m_d]}\}$. 

We denote $[n]=\{1,2,\ldots,n\}$ for $n\in \mathbb{N}^+$. Given a polynomial $p\in \mathbb{R}_k[x]$ of degree at most $k$ we define the following parameters;
\begin{itemize}
    \item $W(p):=W(p(A)) = \max_{u\in V}\{p(A)_{uu}\}$;
    \item $w(p):=w(p(A)) = \min_{u\in V}\{p(A)_{uu}\}$;
    \item $\Lambda(p):=\Lambda(p(A)) = \max_{i\in [2,n]}\{p(\lambda_i)\}$;
    \item $\lambda(p):=\lambda(p(A)) = \min_{i\in [2,n]}\{p(\lambda_i)\}$.
\end{itemize}
Moreover, we denote $\Delta(G)$ and $\delta(G)$ for the maximum and minimum degree of a graph $G$, respectively. Additionally, the \emph{neighborhood}, i.e.\! the set of adjacent vertices in $G$, of a vertex $v\in V$ is denoted by $N_G(v)$. 

Let $A\in \mathbb{C}^{n\times n}$ and $i \in [n]$, then we denote $\lambda_i(A):=\lambda_i$ where $\lambda_1\geq \cdots\geq \lambda_n$ are the eigenvalues of $A$. Note that we avoid this notation if there are only a few matrices that we need to consider. Furthermore, we denote the Hermitian conjugate of a matrix $A$ by $A^{\dagger}$.

\begin{theorem}[Interlacing \cite{haemers1995interlacing}]\label{thm:interlacing}
    Let $S\in \mathbb{C}^{n\times m}$ be a matrix such that $S^{\dagger}S=I$. Let $A\in \mathbb{C}^{n\times n}$ be a hermitian matrix. Then the eigenvalues of $B:=S^{\dagger}AS$ interlace the eigenvalues of $A$, that is \begin{align*}
        \lambda_i(A)\geq \lambda_i(B)\geq\lambda_{n-m+i}(A) & \qquad\text{for } i\in \{0,\ldots,m\}.
    \end{align*} 
    If the interlacing is tight, i.e.\! there exists a $k\in\{0,\ldots,m\}$ such that
\begin{align*}
    \lambda_i&= \mu_i \qquad\text{for } 0 \leq i \leq k \text{ and}; \\
    \lambda_{n-m+i}&= \mu_i \qquad\text{for } k+1 \leq i \leq m,
\end{align*} 
then $SB=AS$.
\end{theorem}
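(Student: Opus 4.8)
The plan is to prove the two inequalities via the Courant--Fischer min-max characterization, and then treat the tightness claim by an extremal ``deflation'' argument applied to the pulled-back eigenvectors. The single computation that drives everything is the following Rayleigh-quotient identity: because $S^{\dagger}S=I$, the map $y\mapsto Sy$ is a linear isometry from $\mathbb{C}^m$ into $\mathbb{C}^n$, so for every $y\in\mathbb{C}^m$ we have $\|Sy\|=\|y\|$ together with
$$
y^{\dagger}By \;=\; y^{\dagger}S^{\dagger}ASy \;=\; (Sy)^{\dagger}A(Sy).
$$
Thus the Rayleigh quotient of $B$ at $y$ coincides with that of $A$ at $Sy$.

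First I would establish the inequalities. Writing $\lambda_i(B)=\max_{\dim U=i}\ \min_{0\neq y\in U}\ y^{\dagger}By/(y^{\dagger}y)$ (with the paper's convention $\lambda_1\geq\cdots\geq\lambda_n$), the identity above lets me replace each inner Rayleigh quotient by the $A$-quotient on $SU$, which is an $i$-dimensional subspace of $\mathbb{C}^n$ since $S$ is injective. As $SU$ ranges only over \emph{some} $i$-dimensional subspaces of $\mathbb{C}^n$, the outer maximum over these is at most the full maximum, giving $\lambda_i(B)\leq\lambda_i(A)$. The companion bound $\lambda_i(B)\geq\lambda_{n-m+i}(A)$ follows either from the dual max-min formula or, more cheaply, by applying the first inequality to $-A$ (whose eigenvalues are the $-\lambda_j$ in reverse order).

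For the tightness statement I would argue as follows. Fix an orthonormal basis $v_1,\dots,v_m$ of $\mathbb{C}^m$ with $Bv_i=\mu_i v_i$, and set $s_i:=Sv_i$; these are orthonormal in $\mathbb{C}^n$ by the isometry property. The residuals $r_i:=As_i-\mu_i s_i$ satisfy
$$
S^{\dagger}r_i \;=\; S^{\dagger}ASv_i-\mu_i S^{\dagger}Sv_i \;=\; Bv_i-\mu_i v_i \;=\; 0,
$$
so each $r_i$ is orthogonal to the column space of $S$. The goal reduces to showing $r_i=0$ for all $i$, since then $ASv_i=\mu_i Sv_i=SBv_i$ for every $i$ and hence $AS=SB$. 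To force $r_i=0$ I would use the equality hypothesis. For the indices $0\le i\le k$ where $\mu_i=\lambda_i$ (top matching), I induct upward: $s_1$ has $A$-Rayleigh quotient $\mu_1=\lambda_1(A)$, the global maximum, so $s_1$ is an eigenvector of $A$; having shown $s_1,\dots,s_{i-1}$ are eigenvectors for the top $i-1$ eigenvalues, the complement $\{s_1,\dots,s_{i-1}\}^{\perp}$ is $A$-invariant and $A$ restricted to it has largest eigenvalue $\lambda_i=\mu_i$, which $s_i$ attains, so $s_i$ is an eigenvector and $r_i=0$. For the indices $k+1\le i\le m$ where $\mu_i=\lambda_{n-m+i}$ (bottom matching) I run the symmetric induction downward from $i=m$ using the smallest eigenvalues.

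The main obstacle is this tightness argument rather than the inequalities. Two points need care: first, verifying that after deflating by the first $i-1$ pulled-back eigenvectors the new extreme eigenvalue of $A$ is \emph{exactly} $\mu_i$ (this is where multiplicities must be tracked, so that removing $i-1$ orthonormal eigenvectors genuinely removes the top $i-1$ eigenvalues counted with multiplicity); and second, reconciling the top-matched block $s_1,\dots,s_k$ with the bottom-matched block $s_{k+1},\dots,s_m$ into the single conclusion $AS=SB$, which works precisely because all the $s_i$ are mutually orthonormal and each is shown to be an $A$-eigenvector with eigenvalue $\mu_i$.
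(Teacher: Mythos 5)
The paper does not prove this theorem itself; it is stated as a cited preliminary from Haemers' survey, so there is no in-paper proof to compare against step by step. Your argument is correct and is essentially the standard proof of the cited result: the Courant--Fischer/Rayleigh-quotient derivation gives both inequalities, and your deflation induction (with the multiplicity bookkeeping you flag) correctly shows each $Sv_i$ is an $A$-eigenvector for $\mu_i$ in the tight case, from which $AS=SB$ follows.
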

\subsection{Quantum colorings}

Quantum colorings were first introduced as quantum strategies for nonlocal games based on pseudo-telepathy \cite{cleve2004consequences,galliard2002pseudo, cameron2006quantum}. However, there is an equivalent definition that does not require defining quantum strategies in their full generality \cite{cameron2006quantum}, which is purely combinatorial, and that has been used many times in the literature in order to obtain new results on quantum colorings \cite{manvcinska2018oddities, wocjan2018spectral, abiad2025eigenvalue,elphick2019spectral, godsil2024quantum}.

A \emph{distance-$k$ quantum $c$-coloring} is a quantum $c$-coloring of the power graph $G^k$, see the below definition for $k=1$. The \emph{distance-$k$ quantum chromatic number} is the quantum chromatic number of $G^k$, denoted $\chi_{kq}(G):=\chi_q(G^k)$. For completeness, we give a full definition of a distance-$k$ quantum $c$-coloring equivalent to the one with $G^k$.

An \emph{orthogonal projector} is a matrix $P\in \mathbb{C}^{n\times n}$ such that $P^2=P=P^{\dagger}$. A \emph{distance-$k$ quantum $c$-coloring} of a graph $G=(V,E)$ is a collection of orthogonal projectors $\{P_{v,i}\mid  v\in V, i\in [c]\}$ in $\mathbb{C}^{d\times d}$ such that \begin{itemize}
    \item for all vertices $v\in V$ \begin{align}
        \sum_{i\in [c]}P_{v,i}=I_d &\qquad\text{(completeness),}
    \end{align} 
    \item for all distinct vertices $v,w$ such that $d_G(v,w)\leq k$ and for all $i\in [c]$ \begin{align}
        P_{v,i}P_{w,i}=0 &\qquad\text{(orthogonality).}
    \end{align} 
\end{itemize}
The \emph{distance-$k$ quantum chromatic number} $\chi_{kq}(G)$ is the smallest $c$ for which the graph $G$ admits a distance-$k$ quantum $c$-coloring for some dimension $d>0$.

Note that, in the orthogonality condition we have the subtle restriction that $v,w$ must be \emph{distinct} in order to have orthogonality of $P_{v,i}$ and $P_{w,i}$, since otherwise we obtain $P_{v,i}^2=P_{v,i}=0$ for all $v\in V$ and $i\in [m]$. In the same spirit, notice that $v$ isn't adjacent to itself in the power graph $G^k$.

Furthermore, it is well-known that $\chi_q(G)\leq\chi(G)$, and this holds for power graphs in particular, so $\chi_{kq}(G)\leq \chi_k(G)$ for any graph $G$.

Let $P_k\in \mathbb{C}^{m\times m}$ for $k\in [c]$ be orthogonal projectors. Then, they form a \emph{resolution of the identity} if and only if $$
    \sum_{k\in [c]}P_k=I_m.
$$
The operation $\mathcal{D}$ defined by $$
    \mathcal{D}:\mathbb{C}^{m\times m}\rightarrow\mathbb{C}^{m\times m}:X \mapsto \mathcal{D}(X)=\sum_{k\in [c]}P_kXP_k
$$
is called \emph{pinching} if and only if the orthogonal projectors $P_k$ form a resolution of the identity, see e.g.\! \cite{wocjan2019spectral}. We say $\mathcal{D}$ \emph{annihilates} $X$ if $\mathcal{D}(X)=0$.

The following result will be useful in the proof of Theorem \ref{thm:quantumhoffmanbetterbound}.

\begin{lemma}[Mutual orthogonality]\cite{kaye2006introduction}
    Let $P_i\in \mathbb{C}^{m\times m}$ for $i\in [c]$ be orthogonal projectors such that they form a resolution of the identity. Then, $\{P_i\mid i \in [c]\}$ are mutually orthogonal, i.e.\! for distinct $i,j\in [c]$, it holds that $
        P_i P_j= 0.
    $
\end{lemma}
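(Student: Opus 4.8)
The plan is to exploit only three facts: each $P_i$ is idempotent ($P_i^2 = P_i$), each $P_i$ is self-adjoint ($P_i = P_i^{\dagger}$), and $\sum_{k\in[c]} P_k = I_m$. The naive move is to write $P_i = P_i I_m = \sum_{k\in[c]} P_i P_k = P_i + \sum_{k\neq i} P_i P_k$, which gives $\sum_{k\neq i} P_i P_k = 0$; however, this identity alone does not force each individual product $P_i P_k$ to vanish, since cancellation among the summands is a priori possible. The key idea, therefore, is to introduce positivity so that a vanishing sum forces each term to vanish.

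First I would sandwich rather than merely multiply on one side: starting from $P_i = P_i I_m P_i$ and expanding the middle identity gives
\[
P_i = \sum_{k\in[c]} P_i P_k P_i = P_i^3 + \sum_{k\neq i} P_i P_k P_i = P_i + \sum_{k\neq i} P_i P_k P_i,
\]
so that $\sum_{k\neq i} P_i P_k P_i = 0$. The next step is to observe that each summand is positive semidefinite: using $P_i = P_i^{\dagger}$ and $P_k = P_k^{\dagger} = P_k^2$ one checks that $P_i P_k P_i = (P_k P_i)^{\dagger}(P_k P_i)$, which has the form $M^{\dagger}M$ and is hence PSD. A finite sum of PSD matrices equals the zero matrix only if every summand is zero, so $P_i P_k P_i = 0$ for every $k\neq i$.

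Finally I would invoke the standard fact that $M^{\dagger}M = 0$ implies $M = 0$ (its trace, equal to the squared Hilbert--Schmidt norm of $M$, vanishes). Applying this with $M = P_k P_i$ yields $P_k P_i = 0$, and taking Hermitian conjugates gives $P_i P_k = (P_k P_i)^{\dagger} = 0$. Since $i$ and $k$ were arbitrary distinct indices, this is exactly the claimed mutual orthogonality.

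The only genuinely delicate point is the one flagged above: passing from the algebraic relation $\sum_{k\neq i} P_i P_k = 0$ to the termwise conclusion. This is precisely where the Hermitian and idempotent structure must be used, rather than the resolution-of-identity relation alone; the sandwiching by $P_i(\cdot)P_i$ together with the PSD observation is what makes the argument go through. An equivalent route avoiding the PSD computation is to prove the Pythagorean-type identity $\|v\|^2 = \sum_{k\in[c]} \|P_k v\|^2$ for all $v$, and then specialize to $v\in\operatorname{range}(P_j)$ (where $P_j v = v$) to force $P_k v = 0$ for $k\neq j$; either version isolates the same positivity mechanism.
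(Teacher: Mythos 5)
The paper does not prove this lemma at all: it is stated as a known fact and attributed to the cited textbook, so there is no in-paper argument to compare against. Your proof is correct and complete, and it is the standard argument: the sandwiching $P_i = \sum_k P_i P_k P_i$ combined with the identity $P_i P_k P_i = (P_k P_i)^{\dagger}(P_k P_i)$ correctly reduces the claim to the facts that a vanishing sum of positive semidefinite matrices has vanishing summands and that $M^{\dagger}M=0$ forces $M=0$. You are also right to flag that the one-sided expansion $\sum_{k\neq i}P_iP_k=0$ is insufficient by itself; the positivity injected by conjugating with $P_i$ (or, equivalently, your Pythagorean variant $\|v\|^2=\sum_k\|P_kv\|^2$ applied to $v\in\operatorname{range}(P_j)$) is exactly the needed mechanism, and either version would serve as a self-contained proof here.
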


\section{Eigenvalue bound for  the distance quantum chromatic number: unified $\kappa$ bound } \label{section:kappabound}

The goal of this section is to extend the $\kappa$ bound from Theorem \ref{thm:hoffmanbetterbound}, to the distance-$k$ quantum setting of graph colorings. One source of motivation is the fact that the $\kappa$ bound has been shown to hold for the quantum chromatic number  \cite{wocjan2019spectral}.

\begin{theorem}\cite[Theorem 2]{wocjan2019spectral}\label{thm:quantumkappa}
    A connected graph $G$ on $n$ vertices with adjacency eigenvalues $\lambda_1\geq \cdots \geq \lambda_n$ and quantum chromatic number $\chi_q(G)$ satisfies
    \begin{equation}
    \chi_q(G)\geq 1+\kappa
    \end{equation}
    where $\kappa$ is the smallest integer such that 
    $$\lambda_1+\sum_{i=1}^{\kappa}\lambda_{n+1-i}\leq 0.$$

\end{theorem}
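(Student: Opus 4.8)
The plan is to translate the existence of a quantum $c$-coloring into a vanishing-pinching statement on $\mathbb{C}^{nd}$ and then run a weighted quotient argument in the spirit of Hoffman's original proof. Given a quantum $c$-coloring $\{P_{v,i}\}$ with $P_{v,i}\in\mathbb{C}^{d\times d}$, I would assemble the block-diagonal projectors $P_i=\bigoplus_{v\in V}P_{v,i}$ acting on $\mathbb{C}^{nd}\cong\mathbb{C}^{n}\otimes\mathbb{C}^{d}$. Completeness $\sum_{i\in[c]}P_{v,i}=I_d$ gives $\sum_{i\in[c]}P_i=I_{nd}$, so the $P_i$ form a resolution of the identity and, by the mutual orthogonality lemma, are pairwise orthogonal. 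Writing the $(v,w)$ block of $P_i(A\otimes I_d)P_i$ as $a_{vw}P_{v,i}P_{w,i}$ and invoking the orthogonality condition $P_{v,i}P_{w,i}=0$ for adjacent $v,w$ (the diagonal blocks vanish since $a_{vv}=0$), I obtain $P_i(A\otimes I_d)P_i=0$ for every $i$. Hence the associated pinching $\mathcal{D}$ annihilates $A\otimes I_d$; equivalently, setting $V_i=\operatorname{range}(P_i)$, the space $\mathbb{C}^{nd}$ splits orthogonally as $\bigoplus_i V_i$ and the compression of $A\otimes I_d$ to each $V_i$ is the zero matrix.

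Next I would build a trace-zero quotient that still detects $\lambda_1$. Let $x$ be a Perron eigenvector of $A$, fix a unit vector $g\in\mathbb{C}^{d}$, and set $z_i=P_i(x\otimes g)$. By mutual orthogonality the nonzero $z_i$ are pairwise orthogonal with $\sum_i z_i=x\otimes g$, and $z_i^{\dagger}(A\otimes I_d)z_i=(x\otimes g)^{\dagger}P_i(A\otimes I_d)P_i(x\otimes g)=0$. Taking the normalized nonzero $z_i$ as the columns of an isometry $S$ and putting $B=S^{\dagger}(A\otimes I_d)S$, the matrix $B$ is Hermitian with vanishing diagonal, so $\operatorname{tr}B=0$; moreover $x\otimes g=\sum_i z_i\in\operatorname{range}(S)$ realizes the Rayleigh quotient $\lambda_1$, whence $\lambda_1(B)\ge\lambda_1$. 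Combining these two facts yields
\[
\lambda_1+\sum_{j\ge 2}\lambda_j(B)\le \lambda_1(B)+\sum_{j\ge 2}\lambda_j(B)=\operatorname{tr}B=0 .
\]
It then remains to compare $\sum_{j\ge 2}\lambda_j(B)$ with the sum $\sum_{i=1}^{c-1}\lambda_{n+1-i}$ of the smallest eigenvalues of $A$.

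I expect this last comparison to be the main obstacle, and it is precisely what separates the $\kappa$ bound from the plain Hoffman bound. Applying the interlacing theorem (Theorem~\ref{thm:interlacing}) directly to $S$ and $A\otimes I_d$ only gives $\lambda_j(B)\ge\lambda_{nd-c+j}(A\otimes I_d)$; since each eigenvalue of $A\otimes I_d$ has multiplicity at least $d$, in the realistic regime $d\ge c-1$ all of these lower bounds collapse to $\lambda_n$, producing merely $\sum_{j\ge 2}\lambda_j(B)\ge(c-1)\lambda_n$ and hence the weaker estimate $\lambda_1+(c-1)\lambda_n\le 0$. To recover the refined sum of \emph{distinct} smallest eigenvalues one must instead read off the spectrum of $A$ at size $n$ rather than the $d$-fold degenerate spectrum of $A\otimes I_d$. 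My plan here is to descend from $\mathbb{C}^{nd}$ to $\mathbb{C}^n$ while preserving the vanishing of the diagonal blocks, for instance by replacing the fixed ancilla $g$ by a choice adapted to the vertex blocks of the $P_i$, so that the resulting quotient genuinely interlaces $A$ and therefore satisfies $\lambda_j(B)\ge\lambda_{n-c+j}$ for $j\ge 2$. Granting such a refined interlacing, $\sum_{j\ge 2}\lambda_j(B)\ge\sum_{i=1}^{c-1}\lambda_{n+1-i}$, and the displayed inequality becomes $\lambda_1+\sum_{i=1}^{c-1}\lambda_{n+1-i}\le 0$; this says that $c-1$ satisfies the inequality defining $\kappa$, and since $\kappa$ is the least such integer we conclude $\kappa\le c-1$, i.e. $\chi_q(G)=c\ge 1+\kappa$.
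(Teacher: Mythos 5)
Your setup is exactly the one the paper uses (in the proof of Theorem~\ref{thm:quantumhoffmanbetterbound}, which specializes to this statement when $k=1$ and $p(x)=x$): block-diagonal projectors forming a resolution of the identity, the pinching annihilating $A\otimes I_d$, a compression with zero trace containing the Perron direction, and interlacing. But the step you yourself flag as the obstacle is the entire content of the theorem, and the fix you sketch is not viable. You cannot in general ``descend from $\mathbb{C}^{nd}$ to $\mathbb{C}^n$'' by adapting the ancilla: the projectors $P_{v,i}$ genuinely live in dimension $d$, and any attempt to collapse each vertex block to a single vector while keeping both $P_i(A\otimes I_d)P_i=0$ and a compression of $A$ itself would essentially reduce a quantum coloring to classical data, which is exactly what quantum colorings do not permit. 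With a single ancilla $g$ your isometry $S$ has at most $c$ columns, and interlacing against $A\otimes I_d$ (whose spectrum is that of $A$ with every multiplicity multiplied by $d$) can only return $\lambda_n$ repeated $c-1$ times once $d\geq c-1$; so your argument, as written, proves only the Hoffman bound $\lambda_1+(c-1)\lambda_n\leq 0$.

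The paper's resolution goes in the opposite direction: instead of shrinking the space, it enlarges the spanning set to $\mathcal{S}=\langle P_i(\bm{\nu}\otimes f_j)\mid i\in[c],\,j\in[d]\rangle$, using \emph{all} $d$ standard ancilla vectors $f_j$. This forces the compression $B=S^{\dagger}(A\otimes I_d)S$ (of size $m$ with $d\leq m\leq cd$) to have $\lambda_1$ as an eigenvalue with multiplicity at least $d$, while the basis of $\mathcal{S}$ can still be chosen inside the mutually orthogonal pieces $P_i\mathcal{S}$ so that $\operatorname{tr}B\leq 0$. Interlacing then gives
\begin{equation*}
0\;\geq\;\sum_{i=1}^{m-d}\lambda_{n-i+1}(A\otimes I_d)+d\,\lambda_1 ,
\end{equation*}
and the key arithmetic trick is that the negative tail of $\operatorname{spec}(A\otimes I_d)$ comes in blocks of size $d$: letting $\kappa_d\leq m-d\leq d(c-1)$ be minimal with this property, one may extend the sum to $d\lceil\kappa_d/d\rceil$ terms (all summands being negative by minimality) and then divide the whole inequality by $d$ to land on the spectrum of $A$ itself, yielding $\kappa\leq\lceil\kappa_d/d\rceil\leq c-1$. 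That multiplicity-$d$ degeneracy of the top eigenvalue of $B$, cancelling the $d$-fold degeneracy of $\operatorname{spec}(A\otimes I_d)$, is the missing idea in your proposal; without it the comparison $\sum_{j\geq 2}\lambda_j(B)\geq\sum_{i=1}^{c-1}\lambda_{n+1-i}$ that you ask the reader to grant is unjustified.
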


Another source of motivation comes from the diagram in Figure \ref{fig:diagramofparameters} in the Appendix. This gives an overview of variants of chromatic numbers in the literature, and how they compare to each other. For example, it is well-known that $\chi_q(G)\leq \chi(G)$, for any graph $G$. The orthogonal rank $\xi(G)$ on the other hand, is incomparable to the quantum chromatic number. That is, examples for both of the cases $\xi(G)<\chi_q(G)$ and $\xi(G)>\chi_q(G)$ have been constructed \cite{manvcinska2018oddities}. It is of great interest to know where in this diagram a certain lower bound is located. For example, if an eigenvalue bound for the chromatic number also happens to be a bound for the clique number, then it is clear that the bound can only attain a certain level of sharpness w.r.t.\! the chromatic number. From this point of view, an eigenvalue bound for the chromatic number should ideally not be a lower bound for any other variant of the chromatic number. On the other hand, if an eigenvalue bound for the chromatic number is also a lower bound for the quantum chromatic number, then we could infer the quantum chromatic number whenever the lower bound is sharp.

The $\kappa$ bound is a lower bound for the quantum chromatic number $\chi_q$, see Theorem \ref{thm:quantumkappa}, but not for the fractional chromatic number $\chi_f$. The latter can be shown by considering the Grötzsch graph, see \cite{mycielski1955coloriage}. These two considerations almost completely determine the position of the $\kappa$ bound in the diagram of parameters, with the exception of its relation to the orthogonal rank $\xi(G)$, which is an open problem \cite{wocjan2019spectral}. In extending bounds for the chromatic number to the distance-$k$ setting, we would also like the extensions to maintain their relative position in the diagram of parameters. In particular, a distance-$k$ extension of the $\kappa$ bound would ideally also be a lower bound for the distance-$k$ quantum chromatic number.

Next, we show that the $\kappa$ bound holds in the much more general distance-$k$ setting. To do so we need the following Theorem \ref{thm:pinchingdistancek}, which is the distance-$k$ analogue of \cite[Theorem 1]{wocjan2019spectral}. For our purposes the following formulation is the most useful, but in \cite{abiad2025eigenvalue} another analogous theorem can be found, that offers a more complete description of the pinching operation, which characterizes distance-$k$ quantum colorings.

\begin{theorem}\label{thm:pinchingdistancek}
    Let $\{P_{v,i}\mid  v\in V, i\in [c]\}$ be a distance-$k$ quantum $c$-coloring of $G$ in dimension $d$. Let $p\in\mathbb{R}^k$ be a polynomial of degree $k$. Then, the following block-diagonal orthogonal projectors
    \begin{equation}\label{eq:pinchingdistancek}
        P_i=\sum_{v\in V}e_ve_v^{\dagger} \otimes P_{v,i}\in \mathbb{C}^{n\times n}\otimes\mathbb{C}^{d\times d}
    \end{equation}
    define a pinching operation, and \begin{equation} \label{eq:pinchingpA}
        P_i(p(A)\otimes I_d)P_i=\sum_{v\in V}p(A)_{vv}(e_ve_v^{\dagger}\otimes P_{v,i}),
    \end{equation} for each $i\in [c]$.
    In particular, the projectors $\{P_i\mid i \in [c]\}$ form a resolution of the identity.
\end{theorem}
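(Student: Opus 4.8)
The plan is to verify the three assertions in turn: that each $P_i$ is an orthogonal projector, that the family $\{P_i\}$ sums to the identity (hence defines a pinching), and that the conjugation identity \eqref{eq:pinchingpA} holds. The first two are routine bookkeeping with the Kronecker structure, and all the real content sits in the off-diagonal cancellation needed for the third.

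First I would check that $P_i$ is an orthogonal projector by direct computation. Self-adjointness is immediate, since $(e_ve_v^{\dagger})^{\dagger}=e_ve_v^{\dagger}$ and $P_{v,i}^{\dagger}=P_{v,i}$. For idempotency I would expand $P_i^2=\sum_{v,w}(e_ve_v^{\dagger})(e_we_w^{\dagger})\otimes P_{v,i}P_{w,i}$ and use $e_v^{\dagger}e_w=\delta_{vw}$ to collapse the double sum onto the diagonal, leaving $\sum_v e_ve_v^{\dagger}\otimes P_{v,i}^2=P_i$ because each $P_{v,i}$ is idempotent. The resolution of the identity then follows directly from the completeness relation: summing over $i$ and pulling the sum inside the second tensor factor gives $\sum_{i\in[c]}P_i=\sum_v e_ve_v^{\dagger}\otimes\bigl(\sum_{i\in[c]}P_{v,i}\bigr)=\sum_v e_ve_v^{\dagger}\otimes I_d=I_n\otimes I_d$. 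By the definition of pinching recalled above, this already shows the $P_i$ define a pinching operation.

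The core of the argument is the identity \eqref{eq:pinchingpA}. Expanding the triple product, the spatial factors combine as $(e_ve_v^{\dagger})\,p(A)\,(e_we_w^{\dagger})=(e_v^{\dagger}p(A)e_w)\,e_ve_w^{\dagger}=p(A)_{vw}\,e_ve_w^{\dagger}$, where $p(A)_{vw}$ denotes the $(v,w)$-entry of $p(A)$, while the inner $I_d$ simply leaves $P_{v,i}P_{w,i}$. This yields $P_i(p(A)\otimes I_d)P_i=\sum_{v,w}p(A)_{vw}\,e_ve_w^{\dagger}\otimes P_{v,i}P_{w,i}$. The diagonal terms $v=w$ contribute exactly $\sum_v p(A)_{vv}(e_ve_v^{\dagger}\otimes P_{v,i})$, which is the claimed right-hand side, so it remains to show that every off-diagonal term vanishes.

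The main obstacle, and the only place where the hypotheses genuinely interact, is showing that $p(A)_{vw}\,P_{v,i}P_{w,i}=0$ for all $v\neq w$. The key point is that the degree bound on $p$ is calibrated exactly to the distance-$k$ orthogonality, so I would split into two complementary cases. If $d_G(v,w)>k$, then since $(A^{\ell})_{vw}$ counts walks of length $\ell$ from $v$ to $w$ and there are none of length $\ell\leq k$, every power of $A$ appearing in $p(A)$ has vanishing $(v,w)$-entry, whence $p(A)_{vw}=0$ (this also covers $v,w$ in distinct components, where $d_G(v,w)=\infty$). If instead $d_G(v,w)\leq k$, then the orthogonality condition of the distance-$k$ quantum coloring gives $P_{v,i}P_{w,i}=0$. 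In either case the term is zero, which establishes \eqref{eq:pinchingpA}. I would emphasize that it is precisely the requirement $\deg p\leq k$ that makes the two cases exhaustive: a polynomial of higher degree could produce a nonzero entry $p(A)_{vw}$ at distance exceeding $k$, exactly where orthogonality is no longer guaranteed, and the identity would fail.
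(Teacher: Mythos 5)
Your proposal is correct and follows essentially the same route as the paper: resolution of the identity via the completeness relation, then expansion of the triple product and the same two-case argument ($d_G(v,w)>k$ forces $p(A)_{vw}=0$, while $d_G(v,w)\leq k$ forces $P_{v,i}P_{w,i}=0$) to kill the off-diagonal terms. The only addition is your explicit verification that each $P_i$ is an orthogonal projector, which the paper leaves implicit.
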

\begin{proof}
    We show that $\{P_i\mid i\in [c]\}$ form a resolution of the identity by noting that the orthogonal projectors $P_{v,i}$ satisfy completeness, since this condition holds for $G^k$ and it is independent of the edge set. We include the calculation for completeness;
    \begin{align*}
        \sum_{i\in[c]}P_i&=\sum_{i\in[c]}\sum_{v\in V}e_ve_v^{\dagger} \otimes P_{v,i}\\
        &=\sum_{v\in V}e_ve_v^{\dagger} \otimes \sum_{i\in[c]}P_{v,i} \\
        &=\sum_{v\in V}e_ve_v^{\dagger} \otimes I_d \\
        &=I_n \otimes I_d=I_{nd}.
    \end{align*}
    So we can define a pinching operation. 
    
    Next we show the second equality:
    \begin{align*}
        P_i(p(A)\otimes I_d)P_i &=\sum_{v,w\in V}(e_ve_v^{\dagger} \otimes P_{v,i})(p(A)\otimes I_d)(e_we_w^{\dagger} \otimes P_{w,i}) \\
        &=\sum_{v,w\in V}(e_ve_v^{\dagger}p(A))\otimes(P_{v,i})(e_we_w^{\dagger} \otimes P_{w,i}) \\
        &=\sum_{v,w\in V}(e_ve_v^{\dagger}p(A)e_we_w^{\dagger})\otimes(P_{v,i} P_{w,i}).
    \end{align*}
    If $v\neq w$, then either $d_G(v,w)\leq k$, in which case $P_{v,i} P_{w,i}=0$ by orthogonality, or $d_G(v,w)> k$, in which case $p(A)_{vw}=0$. Also, $e_v^{\dagger}p(A)e_w=p(A)_{vw}$, so
    \begin{align*}
        P_i(p(A)\otimes I_d)P_i &=\sum_{v,w\in V}(p(A)_{vw}e_ve_w^{\dagger})\otimes(P_{v,i} P_{w,i}) \\
        &=\sum_{v\in V}p(A)_{vv}(e_ve_v^{\dagger}\otimes P_{v,i} ).
 \qedhere   \end{align*}
\end{proof}

Now we are ready to prove our main result (Theorem \ref{thm:quantumhoffmanbetterbound}), which is the distance-$k$ analogue of Theorem \ref{thm:quantumkappa}, and improves the previously Hoffman-type bound for the (quantum) distance chromatic number, see \cite[Theorem 4.3]{abiad2022optimization} (and \cite[Theorem 14]{abiad2025eigenvalue}). To simplify notation, we use the following definition.

\begin{definition}
    Let $A\in \mathbb{R}^{n\times n}$ be a real-symmetric matrix, $p\in \mathbb{R}_k[x]$, and $\kappa$ be the smallest integer such that 
\begin{equation}\label{eq:kappathresholddef}
    (\kappa +1)W(p)\geq p(\lambda_1(A))+\sum_{i=1}^{\kappa}\lambda_{n-i+1}(p(A)),
\end{equation}
then we denote $\kappa$ as $\kappa(p(A))$ or $\kappa(p)$. 
\end{definition}

\begin{theorem}[Unified $\kappa$ bound]\label{thm:quantumhoffmanbetterbound}
Let $G=(V,E)$ be a connected graph on $n$ vertices and let $p\in \mathbb{R}_k[x]$ for $k \in \mathbb{N}^+$ such that $p(\lambda_1(A))\geq \Lambda(p(A))$. 
Then, the distance-$k$ quantum chromatic number satisfies
\begin{equation}\label{bound:quantumhoffmanrefinement}
    \chi_{kq}(G)\geq \kappa(p) + 1.
\end{equation}
\end{theorem}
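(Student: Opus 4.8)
The plan is to combine the pinching operation from Theorem~\ref{thm:pinchingdistancek} with the Cauchy interlacing bounds of Theorem~\ref{thm:interlacing}, mimicking the spectral argument that Wocjan--Elphick use for the classical $\kappa$ bound, but now applied to $p(A)\otimes I_d$ instead of $A$. Fix a distance-$k$ quantum $c$-coloring $\{P_{v,i}\}$ with $c=\chi_{kq}(G)$, form the block-diagonal projectors $P_i$ of equation~\eqref{eq:pinchingdistancek}, and let $\mathcal{D}$ be the associated pinching. The starting point is the trace identity: since $\{P_i\}$ is a resolution of the identity, $\tr\bigl(\mathcal{D}(p(A)\otimes I_d)\bigr)=\tr(p(A)\otimes I_d)=d\sum_{v\in V}p(A)_{vv}$, and by equation~\eqref{eq:pinchingpA} the pinched matrix $M:=\mathcal{D}(p(A)\otimes I_d)=\sum_i P_i(p(A)\otimes I_d)P_i$ is block-diagonal with the $(v,i)$-block equal to $p(A)_{vv}P_{v,i}$. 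The key structural fact is that $M$ has the same diagonal entries as $p(A)\otimes I_d$ (pinching preserves the diagonal), hence the same trace, while its eigenvalues are constrained by interlacing.

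First I would set up the eigenvalue comparison. Because $M$ is a pinching of $p(A)\otimes I_d$, its spectrum interlaces that of $p(A)\otimes I_d$ in the majorization sense; more concretely, I would extract the relevant inequality by bounding the top eigenvalues of $M$. Each diagonal block $\sum_i p(A)_{vv}P_{v,i}$ is a convex-type combination bounded above entrywise by $W(p)$, so the largest eigenvalue of $M$ is at most $W(p)$. The hypothesis $p(\lambda_1(A))\ge\Lambda(p(A))$ guarantees that $p(\lambda_1(A))=\lambda_1(p(A))$, i.e. evaluating $p$ at $\lambda_1$ really does give the top eigenvalue of $p(A)$ (and of $p(A)\otimes I_d$), so the large eigenvalues of $p(A)\otimes I_d$ are controlled by $p(\lambda_1(A))$ together with the $\lambda_{n-i+1}(p(A))$ from below. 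The plan is then to write the trace of $M$ in two ways and feed the interlacing estimate: on one side $\tr(M)=\sum_i \lambda_i(M)$, and because $M$ is the pinching of a matrix whose colored blocks force at least $c$ eigenvalues to come from the bottom of $\spec(p(A)\otimes I_d)$, one obtains
\begin{equation*}
    0 \ge p(\lambda_1(A)) + \sum_{i=1}^{c-1}\lambda_{n-i+1}(p(A)) - c\,W(p),
\end{equation*}
after rearranging. Comparing this with the defining inequality~\eqref{eq:kappathresholddef} for $\kappa(p)$ shows that $c-1\ge\kappa(p)$, i.e. $\chi_{kq}(G)=c\ge\kappa(p)+1$, which is exactly the desired bound.

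The main obstacle is making the interlacing step rigorous in the tensored, quantum setting: one must justify why the pinched matrix $M$ is forced to have at least $\kappa(p)+1$ eigenvalues lying at or below the threshold value, equivalently why the top eigenvalue bound combines correctly with the lower eigenvalues of $p(A)$ rather than of $M$ itself. I would handle this by constructing an explicit isometry $S$ from the mutual orthogonality of the $P_i$ (Lemma on mutual orthogonality) whose columns span a carefully chosen colored subspace, so that $S^{\dagger}(p(A)\otimes I_d)S$ is a $c\times c$ (or $c d \times c d$) matrix whose diagonal entries are averages of the $p(A)_{vv}$ and hence each at most $W(p)$; interlacing (Theorem~\ref{thm:interlacing}) then relates its eigenvalues to those of $p(A)\otimes I_d$. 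Summing the diagonal gives $\tr\le c\,W(p)$, while summing the interlacing lower bounds gives $\tr\ge \lambda_1(p(A))+\sum_{i=1}^{c-1}\lambda_{n-i+1}(p(A))$, and the two combine to the displayed inequality. The delicate points are ensuring $S^{\dagger}S=I$ (which is where mutual orthogonality and completeness are used) and verifying that the hypothesis $p(\lambda_1(A))\ge\Lambda(p(A))$ is precisely what lets us identify $\lambda_1(p(A))$ with $p(\lambda_1(A))$ so that the bottom-eigenvalue sum in~\eqref{eq:kappathresholddef} is the correct one.
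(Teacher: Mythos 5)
Your overall strategy coincides with the paper's: pinch $p(A)\otimes I_d$ with the block projectors of Theorem~\ref{thm:pinchingdistancek}, compress to a colored subspace via an isometry $S$, and play a trace upper bound (diagonal entries at most $W(p)$) against a trace lower bound from interlacing. However, there are two genuine gaps. First, you never specify the subspace onto which you compress beyond ``a carefully chosen colored subspace,'' and the choice matters: the paper takes $\mathcal{S}=\langle P_i(\bm{\nu}\otimes f_j)\mid i\in[c],\,j\in[d]\rangle$ with $\bm{\nu}$ the Perron eigenvector precisely so that $\mathcal{S}$ contains the $d$ vectors $\bm{\nu}\otimes f_j$ (this uses the resolution of the identity), which forces $S^{\dagger}(p(A)\otimes I_d)S$ to have $p(\lambda_1(A))$ as an eigenvalue with multiplicity at least $d$. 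Without the Perron eigenvector in the picture, the compression need not contain $p(\lambda_1(A))$ in its spectrum at all, and your trace lower bound $\tr\geq \lambda_1(p(A))+\cdots$ has no justification. This is also the only place the hypothesis $p(\lambda_1(A))\geq\Lambda(p(A))$ and connectedness enter.

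Second, and more seriously, your displayed inequality $0\geq p(\lambda_1(A))+\sum_{i=1}^{c-1}\lambda_{n-i+1}(p(A))-c\,W(p)$ is essentially the $d=1$ (classical) statement and is asserted rather than derived. What the compression actually yields is an $m\times m$ matrix with $d\leq m\leq cd$, whose trace is at least $d\,p(\lambda_1(A))+\sum_{i=1}^{m-d}\lambda_{n-i+1}(p(A)\otimes I_d)$ --- a sum of up to $d(c-1)$ bottom eigenvalues of the \emph{tensored} matrix, not $c-1$ bottom eigenvalues of $p(A)$. Converting this into the un-tensored inequality defining $\kappa(p)$ is the technical heart of the proof: the paper introduces the minimal $\kappa_d$ for the tensored inequality, observes by minimality that every eigenvalue in that sum is negative, pads the sum to $d\lceil\kappa_d/d\rceil$ terms (which only decreases it, since the spectrum of $p(A)\otimes I_d$ is that of $p(A)$ with multiplicities scaled by $d$), factors out $d$, and divides, obtaining $\kappa\leq\lceil\kappa_d/d\rceil\leq c-1$. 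Your proposal identifies ``the interlacing step in the tensored setting'' as the main obstacle but then resolves a different (and easier) issue, namely the construction of the isometry; the de-tensoring bookkeeping is the step that is actually missing and that would fail if one naively replaced eigenvalues of $p(A)\otimes I_d$ by eigenvalues of $p(A)$.
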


\begin{proof}
The proof is analogous to the proofs of \cite[Theorem 4.3]{abiad2022optimization} and \cite[Theorem 2]{wocjan2019spectral}. 

Let $p'(x)\in \mathbb{R}_k[x]$ be any polynomial of degree at most $k$. Let $p(x)=p'(x)-W(p')$ for all $x\in \mathbb{R}$, and observe that inequality \eqref{eq:kappathresholddef} holds for $p$ if and only it holds for $p'$ for all $\kappa\in \mathbb{N}^+$. This is because $p(A)=p'(A)-W(p')I$, so $W(p)=0$, and $\lambda_i(p(A))=\lambda_i(p'(A))-W(p')$ for each $i\in [n]$. So it suffices to prove inequality \eqref{bound:quantumhoffmanrefinement} for $p$.

Let $\{P_{v,i}\mid v\in V,i \in[c\}$ be a distance-$k$ quantum $c$-coloring in dimension $d$. Then, let $\{P_i\mid i\in [c]\}$ be the block-diagonal projectors given by Eq.\! \eqref{eq:pinchingdistancek}.
Let $\bm{\nu}\in \mathbb{C}^n$ denote the Perron eigenvector of $A$ corresponding to $\lambda_1(A)$. Let $f_j\in\mathbb{C}^d$ for $j\in [d]$ be the standard basis vectors of $\mathbb{C}^d$. Let $s_1,\ldots, s_m$ be an orthonormal basis of the subspace $$
    \mathcal{S}=\langle P_i(\bm{\nu}\otimes f_j)\mid i\in [c], j\in [d]\rangle.
$$
We show that the dimension $m$ of $\mathcal{S}$ satisfies $$
    d\leq m\leq cd.
$$
First, note that the $d$ vectors $\bm{\nu}\otimes f_j$ for $j\in [d]$ are orthogonal since $(\bm{\nu}\otimes f_i)^{\dagger}(\bm{\nu}\otimes f_j)=(\bm{\nu}^{\dagger}\bm{\nu})\otimes (f_i^{\dagger}f_j)$. Furthermore, we have $$
    \bm{\nu}\otimes f_j=\sum_{i\in [c]}P_i(\bm{\nu}\otimes f_j),
$$
since the $P_i$ form a resolution of the identity, so $d\leq m$. We also have $m \leq cd$, since $\mathcal{S}$ is spanned by $cd$ vectors.

Let $S\in \mathbb{C}^{nd \times m}$ be the matrix with columns $s_1,\ldots,s_m$. We show that the matrix $S^{\dagger}(p(A)\otimes I_d)S$ has largest eigenvalue $p(\lambda_1(A))$ with multiplicity at least $d$. First, note that, by assumption, $p(A)$ has largest eigenvalue $p(\lambda_1(A))$ with eigenvector $\bm{\nu}$. There exist $d$ orthogonal vectors $y_1,\ldots,y_d\in \mathbb{C}^m$ such that $Sy_j=\bm{\nu}\otimes f_j$, since the vectors $\bm{\nu}\otimes f_j$ for $j\in[d]$ are contained in $\mathcal{S}$, the column space of $S$. We can calculate \begin{align*}
    S^{\dagger}(p(A)\otimes I_d)Sy_j&=S^{\dagger}(p(A)\otimes I_d)(\bm{\nu}\otimes f_j) \\
    &= S^{\dagger}(p(A)\bm{\nu})\otimes f_j \\
    &= p(\lambda_1(A))S^{\dagger}(\bm{\nu}\otimes f_j) \\
    &= p(\lambda_1(A))y_j.
\end{align*}
Secondly, by interlacing we have that each eigenvalue of $S^{\dagger}(p(A)\otimes I_d)S$ is at most $p(\lambda_1(A))$, which proves $S^{\dagger}(A\otimes I_d)S$ has largest eigenvalue $p(\lambda_1(A))$ with multiplicity at least $d$. 

We can always choose the basis vectors $s_1,\ldots,s_m$ such that there exists a $k_i \in [m]$ with $$
    P_{k_i}s_i=s_i \quad \text{and} \quad P_ks_i=0 \quad \text{for all}\quad k\neq k_i,
$$
for each $i$.
This is because the subspaces $\mathcal{S}_i=\langle P_i(\bm{\nu}\otimes f_j)\mid j\in [d]\rangle$ are mutually orthogonal and span $\mathcal{S}$, since the orthogonal projectors $\{P_i\mid i\in[c]\}$ are mutually orthogonal. We can thus choose each $s_i$ in a $\mathcal{S}_{k_i}$, and recall that $P_{k_i}^2=P_{k_i}$. This is where the proof most significantly differs from the proof in \cite{wocjan2019spectral}. Recall that by Theorem \ref{thm:pinchingdistancek} we have $$
    P_i(p(A)\otimes I_d)P_i=\sum_{v\in V}p(A)_{vv}(e_ve_v^{\dagger}\otimes P_{v,i})
$$
for each $i$. We now derive upper and lower bounds for the trace of $S^{\dagger}(p(A)\otimes I_d)S$. First, the last equality implies 
\begin{align*}
\tr(S^{\dagger}(p(A)\otimes I_d)S) &= \sum_{i=1}^ms_i^{\dagger}(p(A)\otimes I_d)s_i \\
&=\sum_{i=1}^ms_i^{\dagger}P_{k_i}(p(A)\otimes I_d)P_{k_i}s_i \\
&=\sum_{v\in V}p(A)_{vv}\sum_{i=1}^ms_i^{\dagger}(e_ve_v^{\dagger}\otimes P_{v,k_i})s_i.
\end{align*}
Now note that $e_ve_v^{\dagger}\otimes P_{v,k_i}$ is positive semidefinite for each $v\in V$ and $i\in[c]$, since it also an orthogonal projector. So, $s_i^{\dagger}(e_ve_v^{\dagger}\otimes P_{v,k_i})s_i\geq 0$ and since $W(p)=0$ we have $p(A)_{vv}\leq 0$ for each $v\in V$, so
\begin{align*}
\tr(S^{\dagger}(p(A)\otimes I_d)S) &\leq 0.
\end{align*}
Now for the lower bound we have 
\begin{align*}
    \tr(S^{\dagger}(p(A)\otimes I_d)S) &=\sum_{i=1}^m\lambda_{m-i+1}(S^{\dagger}(p(A)\otimes I_d)S) \\
    &=\sum_{i=1}^{m-d}\lambda_{m-i+1}(S^{\dagger}(p(A)\otimes I_d)S)+dp(\lambda_1(A)) \\
    &\geq\sum_{i=1}^{m-d}\lambda_{n-i+1}(p(A)\otimes I_d)+dp(\lambda_1(A)),
\end{align*}
where we use that $S^{\dagger}(p(A)\otimes I_d)S$ has eigenvalue $p(\lambda_1(A))$ with multiplicity $d$ and the last inequality is obtained by the interlacing Theorem \ref{thm:interlacing}. Let $\kappa_d$ be the smallest integer such that $$
    0\geq\sum_{i=1}^{\kappa_d}\lambda_{n-i+1}(p(A)\otimes I_d)+dp(\lambda_1(A)).
$$
Such a $\kappa_d$ exists, since $m-d$ suffices, i.e.\! $$
    0\geq\sum_{i=1}^{m-d}\lambda_{n-i+1}(p(A)\otimes I_d)+dp(\lambda_1(A)),
$$ by the above inequalities. Observe that $\kappa_d\leq m-d\leq cd-d=d(c-1)$. Furthermore, we have \begin{align*}
0&\geq\sum_{i=1}^{\kappa_d}\lambda_{n-i+1}(p(A)\otimes I_d)+dp(\lambda_1(A)) \\
&\geq\sum_{i=1}^{d\lceil\kappa_d/d\rceil}\lambda_{n-i+1}(p(A)\otimes I_d)+dp(\lambda_1(A)) \\
&=d\sum_{i=1}^{\lceil\kappa_d/d\rceil}\lambda_{n-i+1}(p(A))+dp(\lambda_1(A)),
\end{align*} 
where the second inequality takes advantage of the fact that all eigenvalues in the summation are negative. To see this, recall the minimality of $\kappa_d$, and the fact that the spectrum of $p(A) \otimes I_d$ is the spectrum of $p(A)$ with each multiplicity multiplied by $d$. Now we divide the last inequality by $d$ and obtain $$
    0\geq\sum_{i=1}^{\lceil\kappa_d/d\rceil}\lambda_{n-i+1}(p(A))+p(\lambda_1(A)).
$$ 
This shows there exists a minimal integer $\kappa$ such that 
$$
    0\geq\sum_{i=1}^{\kappa}\lambda_{n-i+1}(p(A))+p(\lambda_1(A)),
$$ 
and $\kappa \leq \lceil \kappa_d/d\rceil\leq \lceil d(c-1)/d\rceil=c-1$. Lastly recall from the comments made at the beginning of the proof that for any $\kappa'$ we have $$
    0\geq\sum_{i=1}^{\kappa'}\lambda_{n-i+1}(p(A))+p(\lambda_1(A)),
$$
if and only if $$
    (\kappa' +1)W(p')\geq\sum_{i=1}^{\kappa'}\lambda_{n-i+1}(p'(A))+p'(\lambda_1(A)).
$$

We conclude that $\kappa+1\leq c$ for any distance-$k$ quantum $c$-coloring and any polynomial $p$ of degree at most $k$, where $\kappa$ is minimal in inequality \eqref{eq:kappathresholddef}.
\end{proof}

\subsection{Optimization of the unified $\kappa$ bound (Theorem \ref{thm:quantumhoffmanbetterbound})}

Next we propose a Binary Integer Linear Program (BILP) to optimize the bound from Theorem \ref{thm:quantumhoffmanbetterbound}. We assume that $G$ is a connected graph on $n$ vertices and $p\in \mathbb{R}_k[x]$ such that $W(p)=0$. Such a $p$ that yields the optimal bound exists, since we can choose the constant term appropriately without changing the value of the bound.

We may reformulate the optimization problem as the maximization of $\kappa:=\sum_{i=2}^ne_i$ for $e_i\in\{0,1\}$ with some constraints. If we let $p(x)=a_kx^k+\ldots+a_0$, then the eigenvalues $p(\lambda_i)$ are linear in $a_0,\ldots,a_k$. Similarly, each $p(A)_{ii}$ is linear in $a_0,\ldots,a_k$, since we can calculate the diagonals of each $A^j$ in advance. To implement the constraint $W(p)=0$, we will assume $p(A)_{mm}=0$ for each $m \in [n]$ fixed, and run $n$ distinct BILP's and take the maximum. This is analogous to what was done for the Hoffman-type bound in \cite{abiad2024eigenvalue}. To implement some of the constraints using standard LP tricks we will also need a large constant $M\gg 0$, and a small $\varepsilon > 0$. 

{\small{
\setlength{\jot}{1em}
\begingroup
\allowdisplaybreaks

\begin{BILP}\label{BILP:kappak}

\begin{align*}
\textbf{Objective: }& &\\
    & \max \quad  \sum_{i=2}^{n} e_i  &\text{(note that this is } := \kappa \text{)}\\
\textbf{Constraints:} & &\\
    & \sum_{j=0}^ka_j\lambda_1^j +\sum_{i=2}^nz_i + t =0   & \constr[constr:kappakthreshold]\\
    &t+\varepsilon \leq -z_i+M(1-e_i)& \text{for } i\in [2,n] \constr[constr:kappakthresholdstrict]\\
    & z_i \leq \sum_{l=0}^ka_l\lambda_j^l - z_j + M(1-e_i)& \text{for } i,j\in [2,n] \constr[constr:kappakordering]\\
    & \sum_{j=0}^ka_j(A^j)_{mm}=0 & \constr[constr:kappakWp1]\\
    & \sum_{j=0}^ka_j(A^j)_{ii}\leq 0 & \text{for }  i \in [n]\setminus\{m\}\constr[constr:kappakWp2]\\
    & \sum_{j=0}^ka_j\lambda_i^j \leq z_i &\text{for } i\in [2,n] \constr[constr:kappakfixz1]\\
    & z_i \leq \sum_{j=0}^ka_j\lambda_i^j + M(1-e_i)& \text{for } i\in [2,n] \constr[constr:kappakfixz2]\\
    & -Me_i\leq z_i & \text{for } i\in [2,n]\constr[constr:kappakfixz3]
\end{align*}
\end{BILP}
\endgroup
}}

We summarize the variables of the BILP, and their interpretations as follows. 
\begin{itemize}
    \item $a_0,\ldots,a_k \in \mathbb{R}$: the $a_j$'s are the coefficients of  $p(x)=a_kx^k+\ldots+a_0$.
    \item $e_2,\ldots, e_n\in \{0,1\}$: for each $i$ we interpret $e_i=1$ as '$p(\lambda_i)$ is one of the $\kappa$ smallest eigenvalues, taking multiplicities into account,' and $e_i=0$ is interpreted as the negation of that. In other words, the BILP is modeled such that $\sum_{i=1}^\kappa \lambda_{n-i+1}(p(A))=\sum_{i=1}^{n-1}e_ip(\lambda_i)$.
    \item $z_2,\ldots,z_n \in \mathbb{R}$: for each $i$ the BILP is modeled such that $z_i=p(\lambda_i)$ if $e_i=1$, and $z_i=0$ otherwise. These variables are necessary to resolve the issues of non-linearity when multiplying $e_i$'s and $a_j$'s.
    \item $t \in \mathbb{R}_{\geq 0}$: this is a dummy variable which is unnecessary, but simplifies the formulation of the BILP.
\end{itemize}

We now give descriptive explanations for each of the constraints.
\begin{itemize}
    \item \constrref{constr:kappakthreshold} ensures that the condition for $\kappa$ given in Eq.\! \eqref{eq:kappathresholddef} holds.
    \item \constrref{constr:kappakthresholdstrict} ensures that $\sum_{i=2}^n e_i$ is the smallest $\kappa$ for which \constrref{constr:kappakthreshold} is satisfied.
    \item \constrref{constr:kappakordering} ensures that the $p(\lambda_i)$ for which $e_i=1$ are the smaller than all $p(\lambda_j)$ for which $e_j=0$, i.e.\! they are the $\kappa$ smallest $p(\lambda_i)$'s.
    \item \constrref{constr:kappakWp1} and \constrref{constr:kappakWp2} ensure that $W(p)=0$, recalling the assumption that $p(A)_{mm}=0$.
    \item \constrref{constr:kappakfixz1}, \constrref{constr:kappakfixz2} and \constrref{constr:kappakfixz3} ensure that $z_i=p(\lambda_i)$ if $e_i=1$ and $z_i=0$ if $e_i=0$.
\end{itemize}

\section{Consequences of Theorem \ref{thm:quantumhoffmanbetterbound}}\label{section:consequenceskappa}

\subsection{A \texorpdfstring{$\kappa$}{Kappa} bound for the classical distance-\texorpdfstring{$k$}{k} chromatic number} 

\begin{corollary}
\label{cor: newboundchikhoffmanstronger}
Let $G=(V,E)$ be a connected graph on $n$ vertices and let $p\in \mathbb{R}_k[x]$ for $k \in \mathbb{N}^+$ such that $p(\lambda_1(A))\geq \Lambda(p(A))$.
Then, the distance-$k$ chromatic number satisfies \begin{equation}
    \chi_{k}(G)\geq \kappa(p) + 1.
\end{equation}
\end{corollary}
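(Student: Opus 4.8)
The plan is to obtain this corollary as an immediate consequence of the unified $\kappa$ bound (Theorem~\ref{thm:quantumhoffmanbetterbound}), exploiting the fact that the distance-$k$ quantum chromatic number lower bounds its classical counterpart. The key observation is that the hypotheses of the corollary are \emph{identical} to those of Theorem~\ref{thm:quantumhoffmanbetterbound}: $G$ is connected on $n$ vertices, $p\in\mathbb{R}_k[x]$, and $p(\lambda_1(A))\geq \Lambda(p(A))$. Hence Theorem~\ref{thm:quantumhoffmanbetterbound} applies verbatim and yields $\chi_{kq}(G)\geq \kappa(p)+1$ with no additional work.

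First I would recall the relation established in the preliminaries, namely that $\chi_q(G)\leq \chi(G)$ for every graph, and that applying this to the $k$-th power graph gives
$$
\chi_{kq}(G)=\chi_q(G^k)\leq \chi(G^k)=\chi_k(G).
$$
This is the only ingredient beyond Theorem~\ref{thm:quantumhoffmanbetterbound}, and it follows because any classical $c$-coloring of $G^k$ can be realized as a distance-$k$ quantum $c$-coloring in dimension $d=1$ (each projector $P_{v,i}$ is the scalar $1$ if $v$ has colour $i$ and $0$ otherwise, so completeness and orthogonality reduce to the defining conditions of a proper colouring of $G^k$).

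Chaining the two inequalities then completes the argument:
$$
\chi_k(G)\geq \chi_{kq}(G)\geq \kappa(p)+1.
$$
I expect no genuine obstacle here, since the corollary is a direct specialization: the entire analytic content, including the interlacing and pinching estimates, is already discharged in the proof of Theorem~\ref{thm:quantumhoffmanbetterbound}, and only the trivial bound $\chi_{kq}(G)\leq \chi_k(G)$ needs to be invoked. The sole point worth stating explicitly is that the polynomial hypothesis transfers unchanged, so the same $\kappa(p)$ appearing in the quantum bound governs the classical bound as well.
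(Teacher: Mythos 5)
Your proposal is correct and follows exactly the paper's own argument: the corollary is deduced by combining Theorem~\ref{thm:quantumhoffmanbetterbound} with the inequality $\chi_{kq}(G)\leq\chi_k(G)$, which the paper records as well known in the preliminaries. Your extra remark that a classical colouring is a dimension-$1$ quantum colouring is a valid justification of that inequality, just made explicit where the paper merely cites it.
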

\begin{proof}
    This is proven more generally in Theorem \ref{thm:quantumhoffmanbetterbound}, since $ \chi_{kq}(G) \geq 1 +\kappa(p)$ is shown and $\chi_k(G)\geq \chi_{kq}(G)$ holds.
\end{proof}

Corollary \ref{cor: newboundchikhoffmanstronger} is a strengthening of the known Hoffman-type bound \cite[Theorem 4.3]{abiad2022optimization}. Moreover, tightness of the Hoffman-type bound implies that the multiplicity of the smallest eigenvalue $\lambda(p)$ of $p(A)$ is at least $\chi_k-1$. It is valuable to note that almost all graphs have simple spectrum \cite{tao2017random} -- that is, almost all graphs only have adjacency eigenvalues of multiplicity $1$. This clearly impedes the Hoffman (for $k=1$) bound in particular from being sharp in almost all graphs. For $k=2$, we observe experimentally that the $k=2$ unified $\kappa$ bound outperforms the Hoffman-type bound with increasing proportion for larger graphs, see Figure \ref{fig:randomgraphsk2}.

\begin{proposition}\label{prop:kappabetterthanhoffman}
    Let $G$ be a non-empty graph on $n$ vertices. Let $p\in \mathbb{R}_k[x]$ for $k \in \mathbb{N}^+$ such that $p(\lambda_1)\geq \Lambda(p(A))$. 
    Then, we have $$\frac{p(\lambda_1)-\lambda(p)}{W(p)-\lambda(p)}\leq1+\kappa(p)\leq \chi_k(G),$$
    and if there is equality everywhere, then the multiplicity of $\lambda(p)$ as an eigenvalue of $p(A)$ is at least $\chi_k-1$. 
\end{proposition}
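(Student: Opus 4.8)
The plan is to treat the two inequalities and the equality clause in turn. The right-hand inequality $1+\kappa(p)\le \chi_k(G)$ is nothing but Corollary~\ref{cor: newboundchikhoffmanstronger}, so it requires no further work; I only record that it additionally yields $\kappa(p)\le \chi_k(G)-1\le n-1$, a bound on the index range that I will need below. All the genuine content lies in the left-hand inequality and the multiplicity statement.

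For the left-hand inequality, I would first unpack the hypothesis $p(\lambda_1)\ge \Lambda(p(A))$: it says $p(\lambda_1)\ge p(\lambda_i)$ for every $i\in[2,n]$, so $p(\lambda_1)$ is the largest eigenvalue of $p(A)$ and, dually, $\lambda(p)=\min_{i\in[2,n]}p(\lambda_i)$ is its smallest, i.e.\ $\lambda(p)=\lambda_n(p(A))$. Clearing the (positive) denominator $W(p)-\lambda(p)$, the target inequality $\tfrac{p(\lambda_1)-\lambda(p)}{W(p)-\lambda(p)}\le 1+\kappa(p)$ is equivalent to $(\kappa(p)+1)W(p)\ge p(\lambda_1)+\kappa(p)\lambda(p)$. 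I would then invoke the defining inequality~\eqref{eq:kappathresholddef} of $\kappa(p)$, namely $(\kappa(p)+1)W(p)\ge p(\lambda_1)+\sum_{i=1}^{\kappa(p)}\lambda_{n-i+1}(p(A))$, and bound its sum from below. Since $\kappa(p)\le n-1$, the indices $n-i+1$ for $i\in[\kappa(p)]$ range over the $\kappa(p)$ smallest eigenvalues of $p(A)$ and never reach $p(\lambda_1)=\lambda_1(p(A))$; hence each term is at least $\lambda_n(p(A))=\lambda(p)$ and the sum is at least $\kappa(p)\lambda(p)$. Chaining the two estimates gives exactly $(\kappa(p)+1)W(p)\ge p(\lambda_1)+\kappa(p)\lambda(p)$, as wanted.

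For the equality clause, I would assume all three quantities agree and track where the slack in the chain was lost. Equality of the left-hand fraction with $1+\kappa(p)$ forces equality in $\sum_{i=1}^{\kappa(p)}\lambda_{n-i+1}(p(A))\ge \kappa(p)\lambda(p)$. As every summand is at least $\lambda(p)$ and there are $\kappa(p)$ of them summing to $\kappa(p)\lambda(p)$, each summand must equal $\lambda(p)$; that is, the $\kappa(p)$ smallest eigenvalues of $p(A)$ all coincide with $\lambda(p)$, so $\lambda(p)$ has multiplicity at least $\kappa(p)$ in $p(A)$. Combining with $1+\kappa(p)=\chi_k(G)$, i.e.\ $\kappa(p)=\chi_k-1$, this gives multiplicity at least $\chi_k-1$.

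I expect no deep obstacle: the argument essentially repackages the definition of $\kappa(p)$. The two points that demand care are (i) the index bookkeeping --- one must be certain that taking the $\kappa(p)$ smallest eigenvalues of $p(A)$ never picks up $p(\lambda_1)$, which is precisely where $\kappa(p)\le n-1$ (from Corollary~\ref{cor: newboundchikhoffmanstronger}) together with the hypothesis $p(\lambda_1)\ge\Lambda(p(A))$ enters; and (ii) the strict positivity of the denominator $W(p)-\lambda(p)$, needed to pass between the fractional and cleared forms. The latter holds for a non-empty graph because $W(p)\ge \tfrac1n\tr(p(A))\ge \lambda(p)$, with equality only if $p(A)$ is a scalar matrix, a degenerate situation in which the fractional form is meaningless and which I would exclude as a standing non-degeneracy assumption.
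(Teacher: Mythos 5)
Your proposal is correct and follows essentially the same route as the paper: clear the denominator, invoke the defining inequality \eqref{eq:kappathresholddef} for $\kappa(p)$, bound $\sum_{i=1}^{\kappa}\lambda_{n-i+1}(p(A))$ below by $\kappa\,\lambda(p)$, note $W(p)>\lambda(p)$ via the trace, and read off the multiplicity statement from the equality case of that sum bound. Your treatment is merely more explicit about the index bookkeeping (which is in fact not needed, since every eigenvalue of $p(A)$ is at least $\lambda(p)=\lambda_n(p(A))$ under the hypothesis) and about the degenerate case where $p(A)$ is scalar, which the paper glosses over.
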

\begin{proof}
First, note that \begin{align*}
    p(\lambda_1)+\kappa\lambda(p)\leq p(\lambda_1)+\sum_{i=1}^{\kappa}\lambda_{n+1-i}(p(A))\leq (\kappa+1)W(p), 
\end{align*} so we have $$
    p(\lambda_1)-\lambda(p)\leq (\kappa+1)(W(p)-\lambda(p)).
$$
And, $W(p)>\lambda(p)$ follows by considering the trace of $p(A)$, which proves the result. Moreover, equality is only possible if $\lambda(p)=\lambda_{n+1-i}(p(A))$ for each $i \in \{1,\ldots,n\}$. 
\end{proof}

\subsection{Tightness of the unified $\kappa$ bound to compute the quantum distance chromatic parameter}

We consider some examples that show how the new unified $\kappa$ bound from Theorem \ref{thm:quantumhoffmanbetterbound} improves on the known Hoffman-type bound, as shown in Proposition \ref{prop:kappabetterthanhoffman}. For example, we can find infinite classes of graphs where the new unified $\kappa$ bound is strictly better than the Hoffman-type bound.
\begin{example}
    It is easily shown that the Hoffman-type bound for $k=2$, see \cite[Theorem 3]{abiad2025eigenvalue}, is sharp for all $K_{n,n}$. However, when we remove an edge $e$ from $K_{n,n}$, the Hoffman-type bound is never sharp for $n\geq 3$. The unified $\kappa$ bound however, always attains equality with $\chi_2$ for the graphs $K_{n,n} -e$ where $n\geq 3$. We can show this explicitly using the optimal polynomial $p_2$ for the Hoffman-type bound to calculate $\kappa(p_2)$, see \cite[Theorem 3]{abiad2025eigenvalue}.
\end{example}

Note that the unified $\kappa$ bound in the distance-$k$ case is also sharp for numerous graphs for which the Hoffman-type bound from \cite[Theorem 4.3]{abiad2022optimization} is sharp. For example, this includes the Lee graphs and hypercubes that were studied in \cite{abiad2024eigenvalue}.

We can apply Theorem \ref{thm:quantumhoffmanbetterbound} to find the distance-$k$ quantum chromatic number of new graphs. Indeed, equality in $1+\kappa =\chi_k(G)$ implies $1+\kappa =\chi_{kq}(G)$, since $1+\kappa\leq \chi_{kq}(G)\leq \chi_k(G)$. Next we provide an example that also has the interesting property that the distance-$1$ $\kappa$ bound  calculated directly for $G^2$ does not attain $\chi_2(G)$, but the distance-$2$ $\kappa$ bound for $G$, does attain $\chi_2(G)$.

\begin{example}
The Truncated Prism graph $G$ shown in Figure \ref{fig:Truncated_Prism} has distance-$2$ chromatic number $\chi_2(G)=5$, which was determined by computation. Moreover, applying the $\kappa$ bound with $$
p(x)=x^2-\frac{1-\sqrt{13}}2x
$$ yields $1+\kappa=5=\chi_2(G)$. This polynomial was found using BILP \ref{BILP:kappak}, but note that $\frac{1-\sqrt{13}}2$ is an eigenvalue of the graph. Interestingly, if we were to apply the distance-$1$ $\kappa$ bound from Theorem \ref{thm:hoffmanbetterbound} to the power graph $G^2$, we would only obtain a lower bound of $4$ for the chromatic number of $G^2$. This does not enable us to determine the quantum chromatic number of $G^2$, while the distance-$2$ $\kappa$ bound does yield $\chi_q(G^2)=5$.

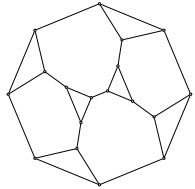
\begin{figure}[!htb]
        \centering
\scalebox{.3}{
        \begin{tikzpicture}[main_node/.style={circle,draw,minimum size=1pt,inner sep=1pt]}]

\node[main_node] (0) at (3.1428571428571423, 0.8571428571428563) {};
\node[main_node] (1) at (1.9716796384634696, 3.685965352749184) {};
\node[main_node] (2) at (1.538639216670851, -0.14391162440371552) {};
\node[main_node] (3) at (1.9716796384634696, -1.9716796384634696) {};
\node[main_node] (4) at (-3.685965352749184, -1.9716796384634696) {};
\node[main_node] (5) at (-1.8483555109214156, -1.5414511674617124) {};
\node[main_node] (6) at (-1.212854632186795, 0.7081094652272153) {};
\node[main_node] (7) at (-2.3123273914135067, 1.1692693949284454) {};
\node[main_node] (8) at (-1.6683906603063017, -0.3913632939994969) {};
\node[main_node] (9) at (-0.5014310820989203, 1.0033642982676376) {};
\node[main_node] (10) at (-3.2529249309565653, 1.8567913632939987) {};
\node[main_node] (11) at (-4.857142857142858, 0.8571428571428563) {};
\node[main_node] (12) at (-0.04730102937484304, 2.1070549836806416) {};
\node[main_node] (13) at (-3.685965352749184, 3.685965352749184) {};
\node[main_node] (14) at (-0.8571428571428563, 4.857142857142858) {};
\node[main_node] (15) at (0.13125784584484101, 3.2585488325382883) {};
\node[main_node] (16) at (-0.8571428571428563, -3.1428571428571423) {};
\node[main_node] (17) at (0.5994476525232244, 0.545016319357269) {};

 \path[draw, thick]
(0) edge node {} (1) 
(0) edge node {} (2) 
(0) edge node {} (3) 
(1) edge node {} (14) 
(1) edge node {} (15) 
(2) edge node {} (3) 
(2) edge node {} (17) 
(3) edge node {} (16) 
(4) edge node {} (5) 
(4) edge node {} (11) 
(4) edge node {} (16) 
(5) edge node {} (8) 
(5) edge node {} (16) 
(6) edge node {} (7) 
(6) edge node {} (8) 
(6) edge node {} (9) 
(7) edge node {} (8) 
(7) edge node {} (10) 
(9) edge node {} (12) 
(9) edge node {} (17) 
(10) edge node {} (11) 
(10) edge node {} (13) 
(11) edge node {} (13) 
(12) edge node {} (15) 
(12) edge node {} (17) 
(13) edge node {} (14) 
(14) edge node {} (15) 
;

\end{tikzpicture}}
        \caption{The Truncated Prism graph has distance-$2$ chromatic number $\chi_2=5$, which equals its distance-$2$ quantum chromatic number. }
        \label{fig:Truncated_Prism}
    \end{figure}
\end{example}

\section{Eigenvalue bound for the distance vector chromatic number}\label{chapter:vector}

In this section, we show new sharp bounds for the distance-$k$ vector chromatic number. These bounds extend the Hoffman-type bound for the distance-$k$ chromatic number \cite[Theorem 4.3]{abiad2022optimization}, and the Hoffman bound on the vector chromatic number \cite[Theorem 1]{bilu2006tales}.

The \emph{vector chromatic number} of a graph $G=(V,E)$ is the smallest $c\in\mathbb{R}_{\geq 2}$ such that there exist unit vectors $u_v\in \mathbb{R}^n$ for $v\in V$ such that $$
    \langle u_v,u_w\rangle \leq \frac{-1}{c-1}
$$
for all adjacent $v,w\in V$ \cite{wocjan2018more}. We denote the vector chromatic number $c$ by $\chi_v(G)$. We define the \emph{distance-$k$ vector chromatic number} as the vector chromatic number of $G^k$ and denote it by $\chi_{kv}(G)=\chi_v(G^k)$. Vector chromatic numbers were introduced in \cite{karger1998approximate}, and are a semidefinite relaxation of the chromatic number. 

Note that we have $\chi_v(G)\leq \chi(G)$ for all graphs $G$ \cite{karger1998approximate}, and it was shown that there exists families of graphs with vector chromatic numbers that are much smaller than their chromatic number \cite{feige2004graphs}.

The following theorem is an important generalization of the Hoffman bound for the vector chromatic number. 

\begin{theorem}\cite[Theorem 1]{bilu2006tales}\label{thm:biluvector}
    Let $G$ be a graph on $n$ vertices with adjacency eigenvalues $\lambda_1\geq \cdots\geq\lambda_n$. Then, $$
        \chi_v(G)\geq 1-\frac{\lambda_1}{\lambda_n}.
    $$
\end{theorem}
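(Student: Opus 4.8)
The plan is to argue directly from an optimal vector coloring. Let $c=\chi_v(G)$ and let $\{u_v : v\in V\}$ be unit vectors in $\mathbb{R}^n$ realizing it, so that $\langle u_v,u_w\rangle \leq t := \tfrac{-1}{c-1}$ for every edge $vw$, with $t<0$ since $c\geq 2$. The whole proof reduces to estimating the single scalar
$$ S \;=\; \sum_{v,w\in V} A_{vw}\,\phi_v\phi_w\,\langle u_v,u_w\rangle $$
from above and from below, where $\phi$ is a top eigenvector of $A$ (so $A\phi=\lambda_1\phi$), normalized by $\|\phi\|=1$ and chosen nonnegative, which is possible by Perron--Frobenius (and strictly positive if $G$ is connected). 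The essential idea is to weight the vertices by $\phi$ rather than uniformly, since this is exactly what forces $\lambda_1$ to enter the estimate.

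For the upper bound I would use the coloring constraint edge by edge. On each edge $A_{vw}=1$ the weight $\phi_v\phi_w$ is nonnegative and $\langle u_v,u_w\rangle\leq t$, while off the edge set $A_{vw}=0$ kills the term; hence $S \leq t\sum_{v,w}A_{vw}\phi_v\phi_w = t\,\phi^{\top}A\phi = t\lambda_1$, using $A\phi=\lambda_1\phi$ and $\|\phi\|=1$. For the lower bound I would expand the inner products coordinatewise: writing $u_v=(u_v^{(1)},\dots,u_v^{(n)})$ and setting $y^{(\ell)}_v=\phi_v u_v^{(\ell)}$, one gets $S=\sum_\ell (y^{(\ell)})^{\top}A\,y^{(\ell)}$. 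Applying the Rayleigh bound $(y^{(\ell)})^{\top}A\,y^{(\ell)}\geq \lambda_n\|y^{(\ell)}\|^2$ to each summand and adding up yields $S\geq \lambda_n\sum_\ell\|y^{(\ell)}\|^2=\lambda_n\sum_v\phi_v^2\|u_v\|^2=\lambda_n\|\phi\|^2=\lambda_n$, where the unit-norm condition $\|u_v\|=1$ collapses the double sum to $\|\phi\|^2=1$.

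Combining the two estimates gives $\lambda_n\leq S\leq t\lambda_1=-\tfrac{\lambda_1}{c-1}$. Since any graph with an edge satisfies $\lambda_n<0$ and since $c-1>0$, rearranging this inequality produces $c-1\geq -\lambda_1/\lambda_n$, that is $\chi_v(G)=c\geq 1-\lambda_1/\lambda_n$, as desired.

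The routine parts here are the coordinatewise expansion and the Rayleigh estimate; the step that actually carries the argument is the Perron weighting, so the main thing to get right is that $\phi$ can be taken nonnegative, which guarantees the correct direction of the edge-by-edge inequality in the upper bound, together with the sign bookkeeping in the final rearrangement (reciprocals reverse the inequality, and multiplying through by the negative quantity $\lambda_n$ must be handled with care). Assuming $G$ connected makes $\phi$ strictly positive and removes any degeneracy, and the case of an edgeless graph, where the bound is vacuous, can be excluded at the outset.
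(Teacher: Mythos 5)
Your proof is correct, and it is essentially the same argument the paper uses: the paper cites this result without reproving it, but its proof of the generalization (Theorem~\ref{thm:vectorchromR}) follows exactly your strategy of weighting by the Perron eigenvector, bounding the quadratic form $\sum_{v,w}A_{vw}\phi_v\phi_w\langle u_v,u_w\rangle$ below via the coordinatewise Rayleigh estimate (this is precisely Lemma~\ref{lem:bilu}) and above via the edge constraint $\langle u_v,u_w\rangle\leq -1/(c-1)$. Your argument is the specialization of that proof to $p(x)=x$, with the sign bookkeeping handled correctly.
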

Motivated by the above, in this section we show that the Hoffman-type bound \cite[Theorem 4.3]{abiad2022optimization} also holds for the vector chromatic number, see Corollary \ref{thm:vectorchromW}. This also completely determines where we can place the Hoffman-type bound in the diagram in Figure \ref{fig:diagramofparameters} (see Appendix), i.e.\! in between the clique number $\omega$ and the vector chromatic number $\chi_v$.

In order to prove our next main result (Theorem \ref{thm:vectorchromR}) we will use the following known lemma.

\begin{lemma}\cite[Lemma 4]{bilu2006tales}\label{lem:bilu}
    Let $A=(a_{ij})\in\mathbb{R}^{n\times n}$ be a real symmetric matrix with smallest eigenvalue $\lambda_n$, then $$
        \lambda_n = \min_{v_1,\ldots,v_n\in \mathbb{R}^n}\frac{\sum_{i,j=1}^na_{ij}\langle v_i,v_j\rangle}{\sum_{i=1}^n\langle v_i,v_i\rangle}.
    $$
\end{lemma}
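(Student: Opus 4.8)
The plan is to reduce this vector-valued Rayleigh quotient to the familiar scalar characterization of the smallest eigenvalue, $\lambda_n = \min_{x\neq 0}(x^\top A x)/(x^\top x)$, applied coordinatewise. Both inequalities ($\min \le \lambda_n$ and $\min \ge \lambda_n$) then fall out cleanly, so I expect no serious obstacle; the only point requiring care is restricting to tuples with a nonzero denominator so that the quotient is well defined.

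First I would establish that the minimum is at most $\lambda_n$. Let $x\in\mathbb{R}^n$ be a unit eigenvector of $A$ associated with $\lambda_n$, and fix any unit vector $e\in\mathbb{R}^n$. Choosing the rank-one tuple $v_i = x_i\, e$ gives $\langle v_i, v_j\rangle = x_i x_j\langle e,e\rangle = x_i x_j$, so the quotient evaluates to $(\sum_{i,j}a_{ij}x_i x_j)/(\sum_i x_i^2) = (x^\top A x)/(x^\top x) = \lambda_n$. Hence the minimum over all admissible tuples is at most $\lambda_n$, and this value is actually attained.

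The reverse inequality is the main step: every tuple $(v_1,\ldots,v_n)$ with $\sum_i\langle v_i,v_i\rangle>0$ yields a quotient at least $\lambda_n$. I would write each vector in coordinates, $v_i = (v_i^{(1)},\ldots,v_i^{(n)})$, and for each fixed $k$ collect the $k$-th coordinates into $w^{(k)} = (v_1^{(k)},\ldots,v_n^{(k)})^\top\in\mathbb{R}^n$. Expanding $\langle v_i,v_j\rangle = \sum_k v_i^{(k)} v_j^{(k)}$ and interchanging the order of summation gives the exact identities $\sum_{i,j}a_{ij}\langle v_i,v_j\rangle = \sum_{k}(w^{(k)})^\top A\, w^{(k)}$ and $\sum_i\langle v_i,v_i\rangle = \sum_k \|w^{(k)}\|^2$. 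Applying the scalar bound $(w^{(k)})^\top A\, w^{(k)} \ge \lambda_n\|w^{(k)}\|^2$ to each summand (trivially an equality when $w^{(k)}=0$) and summing over $k$ yields $\sum_{i,j}a_{ij}\langle v_i,v_j\rangle \ge \lambda_n\sum_i\langle v_i,v_i\rangle$, which is exactly the desired inequality after dividing by the positive denominator.

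Combining the two directions shows the infimum equals $\lambda_n$, and since the rank-one tuple from the first step achieves it, the infimum is a genuine minimum, as stated. The coordinatewise reduction is what makes the argument exact rather than merely an approximation, so the scalar Rayleigh-quotient inequality transfers with no loss; the sole subtlety worth flagging explicitly is the exclusion of the all-zero tuple to keep the quotient defined.
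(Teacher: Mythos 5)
Your proof is correct. Note that the paper does not prove this lemma at all --- it imports it verbatim from Bilu's \emph{Tales of Hoffman} paper (\cite[Lemma 4]{bilu2006tales}) --- so there is no in-paper argument to compare against; your coordinatewise reduction (collecting the $k$-th coordinates of the $v_i$ into vectors $w^{(k)}$, applying the scalar Rayleigh bound to each, and exhibiting the rank-one tuple $v_i = x_i e$ to show attainment) is exactly the standard proof of this fact, equivalent to observing that $\operatorname{tr}(AB)\geq \lambda_n \operatorname{tr}(B)$ for the positive semidefinite Gram matrix $B=(\langle v_i,v_j\rangle)$. Your care in excluding the all-zero tuple is the right (and only) point of delicacy.
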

\subsection{Hoffman-type bound on the distance vector chromatic number}

For a polynomial $p \in \mathbb{R}_k[x]$ and connected graph $G$ with Perron eigenvector $\bm{\nu}$, we define the parameter
\begin{align*}R(p) &=\frac{\sum_{v\in V}p(A)_{vv}\bm{\nu}_v^2}{\sum_{v\in V}\bm{\nu}_v^2}.
\end{align*}
If $G$ is regular, then $R(p)$ reduces to the average of $p(A)_{vv}$ taken over $V$. If we normalize $\bm{\nu}$ such that $\Vert \bm{\nu}\Vert_2=1$, then $R(p)$ is essentially the average of the diagonal $p(A)$ weighted by the squares of the corresponding elements of $\bm{\nu}$.
First, we prove a lower bound for this parameter, which will be useful in the proof of Theorem \ref{thm:vectorchromR}.

\begin{lemma}\label{lem:vectorchromedge}
    Let $G=(V,E)$ be a connected graph and $p\in \mathbb{R}_k[x]$, then $$
        \lambda(p)\leq R(p).
    $$
\end{lemma}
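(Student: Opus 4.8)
The plan is to realise $\lambda(p)$ as the smallest eigenvalue of $p(A)$ on the hyperplane $\bm{\nu}^{\perp}$, and then to average a pointwise consequence of this against exactly the weights $\bm{\nu}_v^2$ that define $R(p)$. Normalise the Perron eigenvector so that $\sum_{v\in V}\bm{\nu}_v^2=1$. Since $G$ is connected, $\lambda_1$ is simple and $\bm{\nu}$ has positive entries; as $p(A)$ is a polynomial in $A$, the vector $\bm{\nu}$ is an eigenvector of $p(A)$ with eigenvalue $p(\lambda_1)$, and $\bm{\nu}^{\perp}$ is spanned by eigenvectors of $p(A)$ with eigenvalues among $p(\lambda_2),\dots,p(\lambda_n)$. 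Hence the least eigenvalue of $p(A)$ restricted to $\bm{\nu}^{\perp}$ equals $\min_{i\in[2,n]}p(\lambda_i)=\lambda(p)$, so every $x\perp\bm{\nu}$ obeys the Rayleigh bound $x^{\top}p(A)x\ge\lambda(p)\,x^{\top}x$.

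First I would feed the projected basis vectors into this bound. For $v\in V$ put $w_v=e_v-\bm{\nu}_v\bm{\nu}$, the orthogonal projection of $e_v$ onto $\bm{\nu}^{\perp}$. Using $p(A)\bm{\nu}=p(\lambda_1)\bm{\nu}$ and $e_v^{\top}\bm{\nu}=\bm{\nu}_v$, one computes
\begin{align*}
w_v^{\top}w_v&=1-\bm{\nu}_v^2, & w_v^{\top}p(A)w_v&=p(A)_{vv}-p(\lambda_1)\bm{\nu}_v^2.
\end{align*}
Because $w_v\perp\bm{\nu}$, the Rayleigh bound $w_v^{\top}p(A)w_v\ge\lambda(p)\,w_v^{\top}w_v$ rearranges into the pointwise estimate
\begin{equation*}
p(A)_{vv}\ge\lambda(p)+\bigl(p(\lambda_1)-\lambda(p)\bigr)\bm{\nu}_v^2,\qquad v\in V.
\end{equation*}
Conceptually this is just the nonnegativity of the diagonal entries of the positive semidefinite matrix $Q\bigl(p(A)-\lambda(p)I\bigr)Q$, where $Q=I-\bm{\nu}\bm{\nu}^{\top}$ is the projection onto $\bm{\nu}^{\perp}$, and it holds regardless of any ordering hypothesis on the eigenvalues.

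Finally I would multiply this estimate by $\bm{\nu}_v^2\ge 0$ and sum over $v$; since $\sum_v\bm{\nu}_v^2=1$, the left-hand side is exactly $R(p)$ and we arrive at
\begin{equation*}
R(p)\ge\lambda(p)+\bigl(p(\lambda_1)-\lambda(p)\bigr)\sum_{v\in V}\bm{\nu}_v^4.
\end{equation*}
The step I expect to be the crux is controlling the sign of this correction term. As $\sum_v\bm{\nu}_v^4\ge 0$, the claim $R(p)\ge\lambda(p)$ follows the moment we know $p(\lambda_1)\ge\lambda(p)$, which is guaranteed by the condition $p(\lambda_1)\ge\Lambda(p(A))$ accompanying the results of this section, since then $p(\lambda_1)\ge\Lambda(p(A))=\max_{i\in[2,n]}p(\lambda_i)\ge\min_{i\in[2,n]}p(\lambda_i)=\lambda(p)$. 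That some such assumption is unavoidable is visible from $G=K_n$ with $p(x)=-x$, where $R(p)=0<1=\lambda(p)$, so the heart of the matter is precisely that $p(\lambda_1)$ is the top eigenvalue of $p(A)$. A less direct alternative would invoke Lemma~\ref{lem:bilu}, applied to $p(A)+M\bm{\nu}\bm{\nu}^{\top}$ for large $M$ (whose least eigenvalue is $\lambda(p)$) with a vector family $\{v_i\}$ subject to $\sum_i\bm{\nu}_i v_i=0$ to annihilate the $M$-term; but matching the resulting quotient to $R(p)$ is more delicate than the projection argument above.
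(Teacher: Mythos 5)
Your argument is correct and takes a genuinely different route from the paper's. The paper's proof of Lemma~\ref{lem:vectorchromedge} is a one-line application of Lemma~\ref{lem:bilu} to $p(A)$ with the test vectors $u_v=\bm{\nu}_v e_v$: these are pairwise orthogonal, so the quotient in that lemma collapses to exactly $R(p)$ and one reads off $\lambda_n(p(A))\leq R(p)$. You instead restrict $p(A)$ to $\bm{\nu}^{\perp}$, test against the projected basis vectors $w_v=e_v-\bm{\nu}_v\bm{\nu}$, derive the pointwise estimate $p(A)_{vv}\geq\lambda(p)+(p(\lambda_1)-\lambda(p))\bm{\nu}_v^2$, and then average with the weights $\bm{\nu}_v^2$. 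Your computations of $w_v^{\top}w_v$ and $w_v^{\top}p(A)w_v$ are correct. The longer route buys a quantitative strengthening (the nonnegative correction term $(p(\lambda_1)-\lambda(p))\sum_v\bm{\nu}_v^4$) and, more importantly, it makes visible exactly where the sign condition $p(\lambda_1)\geq\lambda(p)$ enters.

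On that last point you have in fact exposed a gap in the paper's own statement. Lemma~\ref{lem:bilu} controls the \emph{global} smallest eigenvalue of $p(A)$, which is $\min\{p(\lambda_1),\lambda(p)\}$, not $\lambda(p)=\min_{i\in[2,n]}p(\lambda_i)$; the paper's proof silently identifies the two, which is only legitimate when $p(\lambda_1)\geq\lambda(p)$. Your example $G=K_n$, $p(x)=-x$, where $\lambda(p)=1>0=R(p)$, shows the lemma is false as literally stated. Be aware, though, that the hypothesis $p(\lambda_1)\geq\Lambda(p(A))$ is \emph{not} actually carried by Theorem~\ref{thm:vectorchromR} or Corollary~\ref{thm:vectorchromW} in this section (it appears only in Section~\ref{section:kappabound}), so it is not quite right to say the condition ``accompanies the results of this section''; rather, it needs to be added to the lemma (and propagated to the theorem) for either your proof or the paper's to go through. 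With that hypothesis in place, both arguments are sound.
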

\begin{proof}
    Let $\bm{\nu}$ be the Perron eigenvector of $G$ indexed by the vertices of $G$. Let $u_v=\bm{\nu}_v\cdot e_v$ for each $v\in V$, where $\{e_v\}_{v\in V}$ are the canonical basis vectors of $\mathbb{R}^n$. By Lemma \ref{lem:bilu} we have $$
        \lambda(p)\leq \frac{\sum_{v,w\in V}p(A)_{vw}\langle u_v, u_w\rangle}{\sum_{v\in V}\langle u_v, u_v\rangle}=R(p),
    $$
    since $\langle u_v, u_w\rangle = 0$ whenever $v\neq w$, and $\langle u_v, u_v\rangle=\bm{\nu}_v^2$ for all $v,w\in V$.
\end{proof}

Now we are ready to prove the main result of this section.

\begin{theorem}\label{thm:vectorchromR}
    Let $G$ be a non-empty connected graph with $n$ vertices and adjacency matrix $A$ having eigenvalues $\lambda_1\geq\cdots\geq \lambda_n$, and Perron eigenvector $\bm{\nu}$. Let $p \in \mathbb{R}_k[x]$ with corresponding parameters $\lambda(p):= \min_{i\in[2,n]} \{p(\lambda_i)\}$, $R(p)=\frac{\sum_{v\in V}p(A)_{vv}\bm{\nu}_v^2}{\sum_{v\in V}\bm{\nu}_v^2}$, and assume $R(p)\neq \lambda(p)$. Then, the distance-$k$ vector chromatic $\chi_{kv}(G)$ number satisfies
    \begin{equation*}
        \chi_{kv}(G) \geq \frac{p(\lambda_1) - \lambda(p)}{R(p) - \lambda(p)}.
    \end{equation*}
\end{theorem}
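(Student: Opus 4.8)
The plan is to mimic the proof of the vector-chromatic Hoffman bound (Theorem \ref{thm:biluvector}), replacing the adjacency matrix $A$ by the polynomial matrix $p(A)$ and the global minimum eigenvalue by $\lambda(p)$. Write $c=\chi_{kv}(G)$ and fix a vector $c$-colouring $\{u_v\}_{v\in V}$ of the power graph $G^k$; thus the $u_v$ are unit vectors with $\langle u_v,u_w\rangle\le -1/(c-1)$ whenever $d_G(v,w)\le k$. The decisive structural fact is that $p(A)_{vw}=0$ whenever $d_G(v,w)>k$, so every nonzero off-diagonal entry of $p(A)$ sits on an edge of $G^k$ and is therefore paired with an inner product that the colouring controls. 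I would feed the scaled vectors $w_v:=\bm{\nu}_v u_v$ into Lemma \ref{lem:bilu} applied to the symmetric matrix $p(A)$.

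With this choice the denominator in Lemma \ref{lem:bilu} is $\sum_v\langle w_v,w_v\rangle=\sum_v\bm{\nu}_v^2$, while the numerator splits as
$$\sum_{v,w}p(A)_{vw}\langle w_v,w_w\rangle=\underbrace{\sum_{v}p(A)_{vv}\bm{\nu}_v^2}_{=\,R(p)\sum_v\bm{\nu}_v^2}+\sum_{v\ne w}p(A)_{vw}\bm{\nu}_v\bm{\nu}_w\langle u_v,u_w\rangle,$$
using $\|u_v\|=1$ on the diagonal. For the off-diagonal sum the key identity is $\bm{\nu}^\top p(A)\bm{\nu}=p(\lambda_1)\sum_v\bm{\nu}_v^2$ (since $\bm{\nu}$ is the $\lambda_1$-eigenvector of $A$, hence the $p(\lambda_1)$-eigenvector of $p(A)$), which after subtracting the diagonal gives $\sum_{v\ne w}p(A)_{vw}\bm{\nu}_v\bm{\nu}_w=(p(\lambda_1)-R(p))\sum_v\bm{\nu}_v^2$. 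Provided the off-diagonal entries $p(A)_{vw}$ are nonnegative, each surviving term is bounded using $\langle u_v,u_w\rangle\le-1/(c-1)$, so the off-diagonal sum is at most $-\tfrac{1}{c-1}(p(\lambda_1)-R(p))\sum_v\bm{\nu}_v^2$. Dividing by $\sum_v\bm{\nu}_v^2$ and invoking Lemma \ref{lem:bilu} then yields $\lambda(p)\le R(p)-\tfrac{1}{c-1}(p(\lambda_1)-R(p))$, which rearranges (using $R(p)-\lambda(p)>0$, guaranteed by Lemma \ref{lem:vectorchromedge} together with $R(p)\ne\lambda(p)$) into $c\ge 1+\tfrac{p(\lambda_1)-R(p)}{R(p)-\lambda(p)}=\tfrac{p(\lambda_1)-\lambda(p)}{R(p)-\lambda(p)}$, the claimed bound.

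The step requiring the most care is that Lemma \ref{lem:bilu} delivers the global minimum eigenvalue $\min_i p(\lambda_i)$ of $p(A)$, whereas the statement features $\lambda(p)=\min_{i\ge 2}p(\lambda_i)$. The two agree precisely when $p(\lambda_1)$ is the \emph{largest} eigenvalue of $p(A)$, i.e.\ when $p(\lambda_1)\ge\Lambda(p(A))$, the same normalisation carried in Theorem \ref{thm:quantumhoffmanbetterbound}; under it the Perron eigenvector $\bm{\nu}$ realises the top of the spectrum of $p(A)$ and $\min_i p(\lambda_i)=\lambda(p)$, so I would invoke this hypothesis exactly where Lemma \ref{lem:bilu} is applied. (Without it one would instead have to minimise over vectors orthogonal to $\bm{\nu}$, for which the scaled colouring vectors require a correction term, so retaining the hypothesis is the cleaner route.) The second point to verify is the sign condition $p(A)_{vw}\ge 0$ for $v\ne w$, without which the colouring inequality cannot be used to bound the off-diagonal sum from above; this is the analogue of the nonnegativity of $A$ in Theorem \ref{thm:biluvector}. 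Everything else is a direct computation, and the replacement of $W(p)$ by the $\bm{\nu}$-weighted diagonal average $R(p)$ is precisely what the denominator $\sum_v\bm{\nu}_v^2$ delivers for free, which is what makes this bound dominate the Hoffman-type bound of Corollary \ref{thm:vectorchromW}.
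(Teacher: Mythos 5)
Your proposal follows the paper's argument essentially step for step: apply Lemma~\ref{lem:bilu} to $p(A)$ with the scaled vectors $\bm{\nu}_v u_v$, split the quadratic form into its diagonal part (which equals $R(p)\sum_v\bm{\nu}_v^2$) and its off-diagonal part, control the latter via the colouring inequality together with $\bm{\nu}^{\top}p(A)\bm{\nu}=p(\lambda_1)\Vert\bm{\nu}\Vert^2$, and rearrange using $R(p)-\lambda(p)>0$. The two caveats you isolate are not defects of your write-up but genuine unstated assumptions that the paper's own proof relies on silently: (i) Lemma~\ref{lem:bilu} only controls $\min_{i}p(\lambda_i)$, which coincides with $\lambda(p)=\min_{i\geq2}p(\lambda_i)$ only when $p(\lambda_1)\geq\lambda(p)$ (guaranteed by the hypothesis $p(\lambda_1)\geq\Lambda(p(A))$ carried in Theorem~\ref{thm:quantumhoffmanbetterbound} but dropped here; note that $K_2$ with $p(x)=-x$ already gives $\lambda(p)=1>0=R(p)$, so Lemma~\ref{lem:vectorchromedge} as stated fails without it); and (ii) replacing $\langle u_v,u_w\rangle$ by $-1/(\chi_{kv}-1)$ term by term in the off-diagonal sum preserves the inequality only if $p(A)_{vw}\bm{\nu}_v\bm{\nu}_w\geq0$, i.e.\ only if the off-diagonal entries of $p(A)$ are nonnegative, which is automatic for $A$ itself (the setting of Theorem~\ref{thm:biluvector}) but not for a general $p\in\mathbb{R}_k[x]$. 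So your argument proves exactly what the paper's proof actually proves, namely the bound under these two additional hypotheses on $p$; flagging them explicitly is an improvement rather than a gap.
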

\begin{proof}
    Let $u_v\in \mathbb{R}^n$ for $v\in V$ be the unit vectors on which the distance-$k$ vector chromatic number of $G$ is attained. Let $\chi_{kv}=\chi_{kv}(G)$. Let $\bm{\nu}$ denote the Perron eigenvector of $A$ corresponding to $\lambda_1$, with components indexed by $V$. By Lemma \ref{lem:bilu} applied to $p(A)$ and the vectors $\bm{\nu}_v\cdot u_v$ for $v\in V$ we have \begin{align*}
        \lambda(p)&\leq \frac{\sum_{v,w\in V}p(A)_{vw}\bm{\nu}_v\bm{\nu}_w\langle u_v,u_w\rangle}{\sum_{v\in V}\bm{\nu}_v^2\langle u_v,u_v\rangle}.
    \end{align*}
    Since $\sum_{v\in V}\bm{\nu}_v^2\langle u_v,u_v\rangle = \langle \bm{\nu},\bm{\nu}\rangle$, this inequality is equivalent to $$
        \lambda(p)\langle \bm{\nu},\bm{\nu}\rangle\leq \sum_{v\in V}p(A)_{vv}\bm{\nu}_v^2+\sum_{v\neq w\in V}p(A)_{vw}\bm{\nu}_v\bm{\nu}_w\langle u_v,u_w\rangle,
    $$ where we expanded the sum. Now recall that for a polynomial of degree at most $k$ we have $p(A)_{vw}=0$ if $d_G(v,w)>k$ for any $v,w\in V$. If $v,w$ are distinct and $d_G(v,w)\leq k$, then we have $\langle u_v,u_w\rangle\leq \frac{-1}{\chi_{kv}-1}$. This yields \begin{align*}
        \lambda(p)\langle \bm{\nu},\bm{\nu}\rangle &\leq\sum_{v\in V}p(A)_{vv}\bm{\nu}_v^2-\frac{1}{\chi_{kv}-1}\sum_{v\neq w\in V}p(A)_{vw}\bm{\nu}_v\bm{\nu}_w \\
        &=\left(1+\frac{1}{\chi_{kv}-1}\right)\sum_{v\in V}p(A)_{vv}\bm{\nu}_v^2 -\frac{1}{\chi_{kv}-1}\sum_{v,w\in V}p(A)_{vw}\bm{\nu}_v\bm{\nu}_w,
    \end{align*} by sum arithmetic. By the definition of $R(p)$ we have $$
    \lambda(p)\langle \bm{\nu},\bm{\nu}\rangle \leq
        \frac{\chi_{kv}}{\chi_{kv}-1}R(p)\langle \bm{\nu},\bm{\nu}\rangle -\frac{1}{\chi_{kv}-1}p(\lambda_1)\langle \bm{\nu},\bm{\nu}\rangle,
    $$ and dividing by $\langle \bm{\nu},\bm{\nu}\rangle>0$ yields $$
        \lambda(p)(\chi_{kv}-1)\leq \chi_{kv}R(p)-p(\lambda_1),
    $$ or $$
        \chi_{kv}(\lambda(p)-R(p))\leq \lambda(p) -p(\lambda_1).
    $$
    By Lemma \ref{lem:vectorchromedge}, and $\lambda(p)\neq R(p)$ we have $\lambda(p)-R(p)<0$, which concludes the proof.
\end{proof}
Note that the following corollary is exactly the Hoffman-type bound from \cite[Theorem 4.3]{abiad2022optimization}, but for the vector chromatic number. This helps in placing the Hoffman-type bound in the bounds diagram in Figure \ref{fig:diagramofparameters} (see Appendix).
\begin{corollary}\label{thm:vectorchromW}
    Let $G$ be a non-empty connected graph with $n$ vertices and eigenvalues $\lambda_1,\ldots,\lambda_n$ with adjacency matrix $A$. Let $p \in \mathbb{R}_k[x]$ with corresponding parameters $W(p) := \max_{u \in V}$ $\{p(A)_{uu}\}$ and $ \lambda(p):= \min_{i\in[2,n]} \{p(\lambda_i)\}$, and assume $W(p)\neq \lambda(p)$. Then, the distance-$k$ vector chromatic number satisfies 
        \begin{equation}
            \chi_{kv}(G) \geq \frac{p(\lambda_1) - \lambda(p)}{W(p) - \lambda(p)}.
        \end{equation}
\end{corollary}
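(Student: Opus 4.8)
The plan is to derive Corollary~\ref{thm:vectorchromW} directly from Theorem~\ref{thm:vectorchromR} by comparing the parameters $R(p)$ and $W(p)$. The key observation is that the bound in Theorem~\ref{thm:vectorchromR} is a monotone function of its denominator: once the numerator $p(\lambda_1)-\lambda(p)$ is fixed, enlarging the denominator only weakens the bound. So if I can show that $R(p)\le W(p)$, then replacing $R(p)$ by $W(p)$ in the denominator yields a (possibly weaker) but still valid lower bound, which is exactly the claimed inequality.

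First I would establish the inequality $R(p)\le W(p)$. Recalling the definition
\[
R(p)=\frac{\sum_{v\in V}p(A)_{vv}\,\bm{\nu}_v^2}{\sum_{v\in V}\bm{\nu}_v^2},
\]
this is a weighted average of the diagonal entries $p(A)_{vv}$ with nonnegative weights $\bm{\nu}_v^2$ (the Perron eigenvector has strictly positive entries for a connected graph). Since $W(p)=\max_{u\in V}\{p(A)_{uu}\}$ is the maximum of the quantities being averaged, any weighted average is bounded above by it, giving $R(p)\le W(p)$ immediately. I would also note $\lambda(p)\le R(p)$ from Lemma~\ref{lem:vectorchromedge}, so the chain $\lambda(p)\le R(p)\le W(p)$ holds; combined with the hypothesis $W(p)\ne\lambda(p)$ this forces $\lambda(p)<W(p)$, so the denominator $W(p)-\lambda(p)$ is strictly positive and the stated expression is well-defined.

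Next I would feed this into the monotonicity argument. Both $R(p)-\lambda(p)$ and $W(p)-\lambda(p)$ are positive, and the numerator $p(\lambda_1)-\lambda(p)$ is the same in both expressions and is nonnegative (indeed $p(\lambda_1)\ge\Lambda(p(A))\ge\lambda(p)$). From $R(p)\le W(p)$ we get $R(p)-\lambda(p)\le W(p)-\lambda(p)$, hence
\[
\frac{p(\lambda_1)-\lambda(p)}{W(p)-\lambda(p)}\le\frac{p(\lambda_1)-\lambda(p)}{R(p)-\lambda(p)}\le\chi_{kv}(G),
\]
where the last inequality is Theorem~\ref{thm:vectorchromR}. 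This establishes the corollary. The one point requiring care is confirming the hypotheses of Theorem~\ref{thm:vectorchromR} are met, namely $R(p)\ne\lambda(p)$; but Lemma~\ref{lem:vectorchromedge} together with the strict inequality $\lambda(p)<W(p)$ and $R(p)\le W(p)$ does not by itself rule out $R(p)=\lambda(p)$, so I would instead treat that degenerate case separately. If $R(p)=\lambda(p)$, then the weighted average of the diagonals equals the minimum eigenvalue bound, and I would argue that the desired inequality still holds (or is vacuous) by a direct comparison, since then $R(p)<W(p)$ strictly and the claimed bound is no larger than the trivial lower bound. I expect this boundary case to be the only genuine subtlety; the main inequality itself is a routine averaging-and-monotonicity argument.
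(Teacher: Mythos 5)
Your proposal follows exactly the route the paper takes: the paper's entire proof of this corollary is the observation that $W(p)\geq R(p)$ (a weighted average of the diagonal entries of $p(A)$ cannot exceed their maximum), after which the lower bound of Theorem \ref{thm:vectorchromR} only weakens when $R(p)$ is replaced by $W(p)$ in the denominator. Your averaging-and-monotonicity argument is precisely this, and the core of it is correct. One small caveat on the monotonicity step: you justify $p(\lambda_1)\geq\lambda(p)$ by citing $p(\lambda_1)\geq\Lambda(p(A))$, which is \emph{not} a hypothesis of this corollary (unlike Theorem \ref{thm:quantumhoffmanbetterbound}); if the numerator were negative the monotonicity would flip, although the claimed bound would then be trivially true anyway whenever $W(p)>\lambda(p)$.

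The one place where your write-up does not close is the verification that Theorem \ref{thm:vectorchromR} is applicable, i.e.\ that $R(p)\neq\lambda(p)$. You are right that $\lambda(p)\leq R(p)\leq W(p)$ together with $W(p)\neq\lambda(p)$ does not by itself exclude $R(p)=\lambda(p)$, but your proposed fallback (``the claimed bound is no larger than the trivial lower bound'') is not an argument: if $R(p)=\lambda(p)$ the theorem simply cannot be invoked, and the quantity $\frac{p(\lambda_1)-\lambda(p)}{W(p)-\lambda(p)}$ is not obviously dominated by anything. The clean way to dispose of this case, in the regime where the bound is meaningful (namely $p(\lambda_1)\geq\lambda(p)$, so that $\lambda(p)=\lambda_n(p(A))$ is the smallest eigenvalue of $p(A)$), is the Rayleigh quotient: every diagonal entry satisfies $p(A)_{uu}=e_u^{\dagger}p(A)e_u\geq\lambda_n(p(A))=\lambda(p)$, and the Perron weights $\bm{\nu}_u^2$ are strictly positive because $G$ is connected; hence $R(p)=\lambda(p)$ would force $p(A)_{uu}=\lambda(p)$ for \emph{every} vertex $u$, giving $W(p)=\lambda(p)$ and contradicting the hypothesis. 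Therefore $R(p)>\lambda(p)$ and Theorem \ref{thm:vectorchromR} applies. (For what it is worth, the paper's own proof asserts ``therefore $R(p)>\lambda(p)$'' with no more justification than you give, so you correctly identified the genuine subtlety; you just did not fill it.)
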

\begin{proof}
Note that $W(p)\geq R(p)$, and therefore $R(p)> \lambda(p)$. This concludes the proof.
\end{proof}

\subsection{Optimization of Theorem \ref{thm:vectorchromR}}
We present a Linear Program (\LPref{LP:Rpbound}) to derive the optimal polynomial for Theorem \ref{thm:vectorchromR}, which is analogous to the LP derived in \cite{abiad2024eigenvalue} for the Hoffman-type bound \cite[Theorem 4.3]{abiad2022optimization}. 

Suppose we have a connected graph on $n$ vertices $G=(V,E)$ with adjacency eigenvalues $\spec(G)=\{\lambda_1\geq \cdots\geq \lambda_n\}$, $\ev(G)=\{\theta_0,\ldots,\theta_d\}$, and Perron eigenvector $\bm{\nu}$ such that $\Vert \bm{\nu} \Vert = 1$. We run the following LP once for each assumption $p(\lambda_{m})=p(\theta_{m'})=\lambda(p)$ for $m'\in [d]$. 

The only parameters of the LP \ref{LP:Rpbound} are $a_0,\ldots,a_k \in \mathbb{R}$, the coefficients of $p(x)=a_kx^k+\ldots+a_0$. The constraints are summarized as follows.
\begin{itemize}
    \item \constrref{constr:Rbounddenominator} ensures that the denominator is equal to $1$.
    \item \constrref{constr:Rboundminimallambda} ensures that $\lambda(p)=p(\lambda_{m'})$ for a fixed $m' \in [d]$.
\end{itemize}

{\small{
\begin{LP}\label{LP:Rpbound}
\begin{align*}
\textbf{Objective: }& &\\
    & \max \quad  \sum_{j=0}^{k} a_j\lambda_1^j -\sum_{j=0}^{k}a_j\lambda_m^j&\\
\textbf{Constraints:} & \\
    & \sum_{j=0}^k a_j \sum_{i = 1}^n(A^j)_{ii} \bm{\nu}^2_i-a_j\lambda_m^j=1&\constr[constr:Rbounddenominator]\\
    &\sum_{j=0}^{k} a_j\lambda_i^j -a_j\lambda_m^j\geq 0& \text{for } i\in [d] \constr[constr:Rboundminimallambda]\\
\end{align*}
\end{LP}
}}

\subsection*{Acknowledgments}

Aida Abiad is supported by NWO (Dutch Research Council) through the grant \linebreak VI.Vidi.213.085. The authors thank Clive Elphick for pointing out some related references.

\printbibliography

\newpage
\section{Appendix}
\subsection{Diagram of graph parameters}\label{ssec:parameterdiagram}
The diagram in Figure \ref{fig:diagramofparameters} represents inequalities between relevant graph parameters of any graph $G$. The original diagram, along with references and justifications for the inequalities, can be found in \cite{wocjan2018spectral}. Note that the $k$-distance analogous parameters, i.e.\! the respective parameters of the power graphs, also adhere to these inequalities. 

For clarity, we define exactly what it means when there is an arrow between two parameters, and what it may mean if there is no arrow. Suppose we have parameters $\beta_1,\beta_2$, i.e.\! maps from graphs to $\mathbb{R}$. Then, we have $\beta_1\longrightarrow\beta_2$ if, for all graphs $G$ we have $\beta_1(G)\leq \beta_2(G)$. Suppose that there is no arrow between two parameters, then there are multiple possibilities. It is possible that they are incomparable, which means that there exists a graph $G_1$ such that $\beta_1(G_1)<\beta_2(G_1)$ and a graph $G_2$ such that $\beta_1(G_2)>\beta_2(G_2)$. For example, $\xi$ and $\chi_q$ are incomparable \cite{manvcinska2018oddities}. However, it may also mean that it is unknown whether or not those parameters are comparable. Note that the diagram loops around from the right side to the left side of the page, and that a red cross emphasizes that a certain relation does not hold.

The following parameters are included in the diagram;
\begin{itemize}
    \item $\omega(G)$, the clique number;
    \item $1-\lambda_1/\lambda_n$, the Hoffman bound cf.~\cite{haemers2021hoffman};
    \item $\chi_v(G)$, the vector chromatic number;
    \item $\theta'(G)$, the Schrijver variant of the Lovász number;
    \item $\chi_{sv}(G)$, the strict vector chromatic number;
    \item $\theta(G)$, the Lovász number;
    \item $\theta^+(G)$, the Szegedy variant of the Lovász number;
    \item $\chi_{\text{vect}}(G)$, the vectorial chromatic number (\emph{not} the vector chromatic number);
    \item $\xi_f(G)$, the projective orthogonal rank;
    \item $\kappa + 1$, the $\kappa$ bound \cite{hoffman2003eigenvalues};
    \item $\xi(G)$, the orthogonal rank;
    \item $\chi_q(G)$, the quantum chromatic number;
    \item $\chi_f(G)$, the fractional chromatic number;
    \item $\chi_q^{(1)}(G)$, the rank-1 quantum chromatic number;
    \item $\xi'(G)$, the normalized orthogonal rank;
    \item $\chi_c(G)$, the circular chromatic number;
    \item $\chi(G)$, the standard chromatic number.
\end{itemize}

\begin{figure}[!h]
    \centering
\begingroup
\begin{align*}
\centering
\scalebox{1}{
\begin{tikzpicture}
\node at (0,0) (n1) {$\omega(G)$};
\node at (0, -2) (n5) {$1-\lambda_1/\lambda_n$};
\node at (3,0) (n2) {$\chi_v(G)=\theta'(\bar{G})$};
\node at (7,0) (n3) {$\chi_{sv}(G)=\theta(\bar{G})$};
\node at (10,0) (n4) {$\theta^+(\bar{G})$};
\path[->]  (n1) edge (n2);
\path[->]  (n2) edge (n3);
\path[->]  (n3) edge (n4);
\path[->]  (n5) edge (n2);
\end{tikzpicture}
}
\end{align*}
\begin{align*}
\centering
\scalebox{1}{
\begin{tikzpicture}
\node at (0,0) (n1) {$\theta^+(\bar{G})$};
\node at (3,0) (n2) {$\xi_f(G)$};
\node at (3,-2) (n7) {$\kappa+1$};
\node at (3,2) (n3) {$\lceil\theta^+
(\bar{G})\rceil=\chi_{\text{vect}}(G)$};
\node at (7,-2) (n4) {$\chi_f(G)$};
\node at (7,0) (n5) {$\chi_q(G)$};
\node at (7,2) (n6) {$\xi(G)$};
\path[->]  (n1) edge (n2);
\path[->]  (n1) edge (n3);
\path[->]  (n2) edge (n4);
\path[->]  (n2) edge (n5);
\path[->]  (n3) edge (n5);
\path[->]  (n2) edge (n6);
\path[->]  (n3) edge (n6);
\path[->]  (n7) edge (n5);
\path[->]  (n7) edge (n4);
\draw ($(n7)!0.5!(n4)$) node[red] {\Large $\times$};
\end{tikzpicture}
}
\end{align*}

\begin{align*}
\centering
\scalebox{1}{
\begin{tikzpicture}
\node at (0,-2) (n1) {$\chi_f(G)$};
\node at (0,0) (n2) {$\chi_q(G)$};
\node at (0,2) (n3) {$\xi(G)$};
\node at (3,0) (n4) {$\chi_q^{(1)}(G)$};
\node at (6,0) (n5) {$\xi'(G)$};
\node at (6,-2) (n6) {$\chi_c(G)$};
\node at (10,0) (n7) {$\lceil\chi_c(G)\rceil=\chi(G)$};
\path[->]  (n1) edge (n6);
\path[->]  (n2) edge (n4);
\path[->]  (n3) edge (n4);
\path[->]  (n4) edge (n5);
\path[->]  (n5) edge (n7);
\path[->]  (n6) edge (n7);
\end{tikzpicture}
}
\end{align*}
\endgroup
\caption{Diagram of graph parameters. Note that the relations loop from left to right. }
\label{fig:diagramofparameters}
\end{figure}

\clearpage
\subsection{Computational experiments for Sage named graphs}\label{ssec:sagenamedgraphs}

\begin{table}[!htp]
    \centering
    {\scriptsize{
    \begin{tabular}{|l|c|c|c|c|}
        \hline
        Name &  \cite[Theorem 4.3]{abiad2022optimization} & Theorem \ref{thm:vectorchromR} & Unified $\kappa$ bound & $\chi_2$ \\
        \hline
        \textbf{Bidiakis Cube} & 4 & 4 & 5 & 6 \\
\textbf{Blanusa First Snark Graph} & 4 & 4 & 5 & 6 \\
Blanusa Second Snark Graph & 5 & 5 & 5 & 6 \\
\textbf{Bull Graph} & 3 & 4 & 3 & 4 \\
\textbf{Butterfly Graph} & 3 & 4 & 3 & 5 \\
\textbf{Chvatal Graph} & 5 & 5 & 10 & 12 \\
\textbf{Claw Graph} & 2 & 3 & 2 & 4 \\
Clebsch Graph & 16 & 16 & 16 & 16 \\
\textbf{Dart Graph} & 3 & 4 & 3 & 5 \\
Desargues Graph & 4 & 4 & 4 & 6 \\
Diamond Graph & 4 & 4 & 4 & 4 \\
Dodecahedral Graph & 5 & 5 & 5 & 5 \\
Durer Graph & 5 & 5 & 5 & 6 \\
\textbf{Errera Graph} & 6 & 7 & 7 & 9 \\
\textbf{Flower Snark} & 4 & 4 & 5 & 6 \\
Folkman Graph & 7 & 7 & 7 & 10 \\
Fork Graph & 3 & 3 & 3 & 4 \\
Franklin Graph & 4 & 4 & 4 & 6 \\
\textbf{Frucht Graph} & 4 & 4 & 5 & 6 \\
Gem Graph & 4 & 4 & 4 & 5 \\
\textbf{Goldner Harary Graph} & 6 & 7 & 6 & 11 \\
\textbf{Golomb Graph} & 4 & 5 & 4 & 7 \\
\textbf{Grotzsch Graph} & 5 & 6 & 5 & 11 \\
Heawood Graph & 7 & 7 & 7 & 7 \\
\textbf{Herschel Graph} & 4 & 5 & 4 & 6 \\
Hexahedral Graph & 4 & 4 & 4 & 4 \\
Hoffman Graph & 6 & 6 & 6 & 8 \\
House Graph & 4 & 4 & 4 & 5 \\
\textbf{HouseX Graph} & 4 & 5 & 4 & 5 \\
Icosahedral Graph & 6 & 6 & 6 & 6 \\
\textbf{Krackhardt Kite Graph} & 5 & 6 & 5 & 8 \\
Moebius Kantor Graph & 4 & 4 & 4 & 4 \\
\textbf{Moser Spindle }& 4 & 5 & 4 & 7 \\
Octahedral Graph & 6 & 6 & 6 & 6 \\
Pappus Graph & 5 & 5 & 5 & 6 \\
Petersen Graph & 10 & 10 & 10 & 10 \\
\textbf{Poussin Graph} & 6 & 7 & 6 & 9 \\
\textbf{Robertson Graph} & 7 & 7 & 8 & 9 \\
Shrikhande Graph & 16 & 16 & 16 & 16 \\
\textbf{Sousselier Graph} & 4 & 6 & 4 & 8 \\
Thomsen Graph & 6 & 6 & 6 & 6 \\
Tietze Graph & 4 & 4 & 4 & 7 \\
Truncated Tetrahedral Graph & 4 & 4 & 4 & 4 \\
\textbf{Wagner Graph} & 4 & 4 & 8 & 8 \\
        \hline
    \end{tabular}
    }}
    \caption{The named Sage graphs with the Hoffman-type bound from \cite[Theorem 4.3]{abiad2022optimization} rounded up, the bound from Theorem \ref{thm:vectorchromR} rounded up, and the unified $\kappa$ bound from Theorem \ref{thm:quantumhoffmanbetterbound} optimized using \BILPref{BILP:kappak}, as well as the actual distance-$2$ chromatic number $\chi_2$. Graphs where the unified unified $\kappa$ bound or the bound from Theorem \ref{thm:quantumhoffmanbetterbound} improved on the known Hoffman-type bound.}
    \label{tab:sagenamedgraphs}
\end{table}

\newpage
\subsection{Computational experiments for small graphs}\label{ssec:smallgraphs}
\begin{table}[!htp]
    \centering
    \begin{tabular}{|c|c|c|c|c|}
        \hline
        $n$ & Hoffman bound & $\kappa$ bound & Total $\#$ graphs\\
        \hline
        3 & 2 & 2 & 2 \\
        4 & 4 & 6 & 6 \\
        5 & 6 & 21 & 21 \\
        6 & 20 & 111 & 112 \\
        7 & 45 & 780 & 853 \\
        8 & 184 & 8630 & 11117 \\
        \hline
    \end{tabular}
    \caption{Number of connected graphs on $n$ vertices such that the Hoffman bound is sharp for $\chi(G)$. And, the number of graphs for which the $\kappa$ bound Eq.\! \eqref{bound:hoffmanrefinement} is sharp. The total number of connected graphs on $n$ vertices is given in the last column. Computed by brute-forcing all graphs on at most $8$ vertices.}
    \label{tab:comparisonchromaticbounds}
\end{table}

\begin{figure}[!htp]
\centering
\hspace*{-2.5cm}
\begin{tikzpicture}
\begin{axis}[
    scale only axis,
    ybar,
    xlabel={$n$},
    ylabel={Estimated proportion},
    ylabel style={
      at={(axis description cs:0.05,0.5)},
      anchor=south,
      rotate=0
    },
    ymin=0,
    width=0.8\textwidth,
    height=7cm,
    bar width=8pt,
    xtick={5,...,23},
    enlarge x limits=0.05
]

\addplot+[
    ybar,
    fill=gray!60,
    draw=gray!80
] table[
    x=n,
    y=X
] {
n   X
5   0.028
6   0.044
7   0.058
8   0.077
9   0.110
10  0.119
11  0.139
12  0.179
13  0.197
14  0.218
15  0.245
16  0.266
17  0.287
18  0.308
19  0.328
20  0.365

};

\end{axis}
\end{tikzpicture}
\caption{For each $n$, $1000$ Erdős–Rényi random graphs on $n$ vertices with $p=1/2$ are generated using SageMath (random seed $100$). Then, for each graph, the optimal polynomial $p_2$ for the $k=2$ Hoffman-type bound is computed. Both the Hoffman-type bound and the unified $\kappa$ bound are then computed using $p_2$. Whenever the unified $\kappa$ bound is strictly larger than the ceiling Hoffman-type bound, one instance of strict out-performance is recorded. The proportions of strict out-performance are shown above. Note that the unified $\kappa$ bound is computed on $p_2$, and not on its own optimal polynomial solely to reduce the computation time. }
\label{fig:randomgraphsk2}
\end{figure}
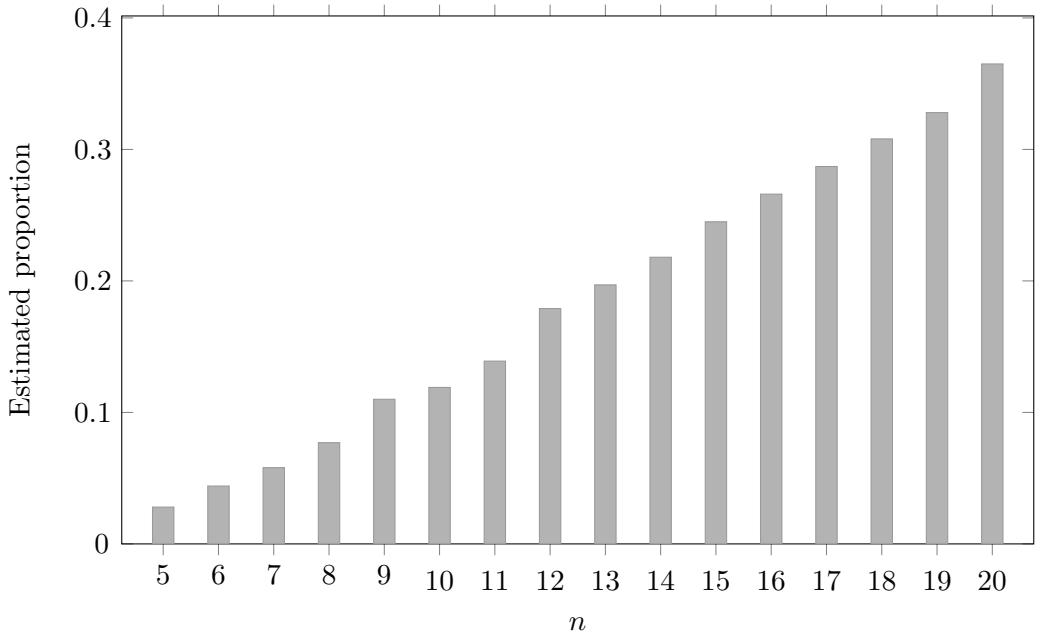

\end{document}